\patchcmd{\@maketitle}{\LARGE \@title}{\LARGE\bfseries\@title}{}{}
\renewcommand{\@seccntformat}[1]{\csname the#1\endcsname.\quad}
\definecolor{darkblue}{rgb}{0,0,.5}
\def\th@plain{%
	\thm@notefont{}
	\itshape 
}
\def\th@definition{%
	\thm@notefont{}
	\normalfont 
}
\renewenvironment{proof}[1][\proofname]{\par
	\normalfont
	\topsep0\p@\@plus3\p@ \trivlist
	\item[\hskip\labelsep\itshape
	#1\@addpunct{.}]\ignorespaces
}{%
	\qed\endtrivlist
}
\newtheorem{theorem}{Theorem}[section]
\newtheorem{lemma}[theorem]{Lemma}
\newtheorem{corollary}[theorem]{Corollary}
\newtheorem{proposition}[theorem]{Proposition}
\theoremstyle{definition}
\theoremstyle{definition}
\theoremstyle{definition}
\newtheorem{remark}[theorem]{Remark}
\newcommand{\menge}[2]{\{{#1}~\big |~{#2}\}} 
\newcommand{\Menge}[2]{\left\{{#1}~\Big|~{#2}\right\}}
\newcommand{\scal}[2]{\left\langle {#1},{#2} \right\rangle}
\newcommand{\NN}{\ensuremath{\mathbb N}}
\newcommand{\nnn}{\ensuremath{{n\in{\mathbb N}}}}
\newcommand{\RR}{\ensuremath{\mathbb R}}
\newcommand{\RP}{\ensuremath{\mathbb{R}_+}}
\newcommand{\RPP}{\ensuremath{\mathbb{R}_{++}}}
\newcommand{\argmin}{\ensuremath{\operatorname*{argmin}}}
\newcommand{\zer}{\ensuremath{\operatorname{zer}}}
\newcommand{\dom}{\ensuremath{\operatorname{dom}}}
\newcommand{\gra}{\ensuremath{\operatorname{gra}}}
\newcommand{\Fix}{\ensuremath{\operatorname{Fix}}}
\newcommand{\Id}{\ensuremath{\operatorname{Id}}}
\newcommand{\prox}{\ensuremath{\operatorname{Prox}}}
\begin{document}

\title{\sf An adaptive splitting algorithm for the sum of three operators}

\author{
Minh N.\ Dao\thanks{
School of Engineering, Information Technology and Physical Sciences, Federation University, Australia.
E-mail: \texttt{m.dao@federation.edu.au}}
~and~
Hung M.\ Phan\thanks{Department of Mathematical Sciences, Kennedy College of Sciences, University of Massachusetts Lowell, Lowell, MA 01854, USA. E-mail: \texttt{hung\char`_phan@uml.edu}.}}

\date{April 12, 2021}

\maketitle

\begin{abstract}
Splitting algorithms for finding a zero of sum of operators often involve multiple steps which are referred to as forward or backward steps. Forward steps are the explicit use of the operators and backward steps involve the operators implicitly via their resolvents. In this paper, we study an adaptive splitting algorithm for finding a zero of the sum of three operators. We assume that two of the operators are generalized monotone and their resolvents are computable, while the other operator is cocoercive but its resolvent is missing or costly to compute. Our splitting algorithm adapts new parameters to the generalized monotonicity of the operators and, at the same time, combines appropriate forward and backward steps to guarantee convergence to a solution of the problem.
\end{abstract}

\noindent{\bf Keywords:} 
Adaptive Douglas--Rachford algorithm,
conically averaged operator,
fixed point iterations,
forward-backward algorithm,
three-operator splitting.

\smallskip
\noindent{\bf Mathematics Subject Classification (MSC 2020):} 
47H05, 
65K05, 
65K10, 
90C25. 

\section{Introduction}

Operator splitting algorithms are developed for structured optimization problems based on the idea of performing the computation separately on individual operators. At each iterations, they requires multiple steps which are known as either \emph{forward} or \emph{backward} steps. The forward steps are almost always easy as they use the operator directly. The backward steps, on the other hand, are often more complicated as they use the resolvents of the operators. While there are many operators whose resolvents are readily computable, there exist operators whose resolvents may not be computable in closed form, thus, it is necessary to use the forward steps in certain situations. Notable examples of splitting algorithms include the \emph{forward-backward algorithm} \cite{LM79} and the \emph{Douglas--Rachford algorithm} \cite{DR56,LM79}, and many others.

In this paper, we study an adaptive splitting algorithm for the inclusion problem 
\begin{equation}\label{e:zero_ABC}
\text{find $x\in X$ such that~} 0\in Ax+Bx+Cx,
\end{equation}
where $X$ is a real Hilbert space, $A,B\colon X\rightrightarrows X$ are generalized monotone operators, and $C\colon X\to X$ is a cocoercive operator. It is worth mentioning that the problem of finding a zero of the sum of finitely many maximally monotone operators and a cocoercive operator can be written as finding a zero of the sum of three operators in a product space: a maximally monotone operator, a normal cone of a closed subspace, and a cocoercive operator \cite{Rag19,RFP13}. The resulting problem can be then solved by the so-called \emph{forward-Douglas--Rachford splitting algorithm} \cite{Bri15,Rag19}. In \cite{DY17}, a \emph{three-operator splitting algorithm} is proposed for solving \eqref{e:zero_ABC} in the case when $A$ and $B$ are maximally monotone. This algorithm can be seen as a generalization of the forward-Douglas--Rachford splitting algorithm. When one of the three operators is zero, the algorithm in \cite{DY17} reduces to the Douglas--Rachford algorithm, forward-backward algorithm, and also \emph{backward-forward algorithm} which has recently been studied in \cite{APR18}. 

The main contribution of the paper is to develop an adaptive splitting algorithm for solving \eqref{e:zero_ABC} when $A$ and $B$ are strongly and weakly monotone operators and $C$ is a cocoercive operator. We utilize new parameters so that the generated sequence converges weakly to a fixed point, while the corresponding image sequence via the resolvent (a.k.a. the shadow sequence) converges weakly to a solution of the original problem. If the strong monotonicity outweighs the weak monotonicity, the convergence of the shadow sequence is strong. In addition, we recover some classical results for the forward-backward, backward-forward, and Douglas--Rachford algorithms. An application to minimizing the sum of three functions is also included.

On the one hand, our new algorithm enhances the framework of \cite{DY17} to allow for handling generalized monotone operators. On the other hand, it extends the adaptive approach in \cite{DP19} to incorporate the third operator that is cocoercive and whose resolvent might not be explicitly computable. In particular, \cite{DP19} proposed the \emph{adaptive Douglas--Rachford splitting algorithm} for finding a zero of the sum of two operators, one of which is strongly monotone and the other is weakly monotone. Therein, adaptive parameters were used to accommodate the corresponding monotonicity properties of the operators. This approach was later studied in \cite{BDP19} using conically averagedness, which is indeed a very different perspective.

On another note, it is well known that the alternating direction method of multipliers (ADMM) can be written as the Douglas--Rachford algorithm in dual settings. Recently, this important relation has been extended in \cite{BCP21} for the adaptive framework, namely, a new adaptive ADMM can be written as the adaptive Douglas--Rachford algorithm in dual settings. We refer interested readers to \cite{BCP21} for a rather comprehensive discussion on the ADMM.

The remainder of the paper is organized as follows. In Section~\ref{s:prelim}, we present our adaptive splitting algorithm and recall some preliminary materials. Section~\ref{s:abstrt} provides an abstract convergence result, which will be used to derive the main results in Section~\ref{s:3opers}. In Section~\ref{s:2opers}, we revisit some convergence results for the case of two operators based on the newly developed framework. Finally, Section~\ref{s:min3f} presents an immediate application of the main results to minimizing the sum of three functions.

\section{The algorithm}
\label{s:prelim}

Throughout, $X$ is a real Hilbert space with inner product $\scal{\cdot}{\cdot}$ and induced norm $\|\cdot\|$. 
The set of nonnegative integers is denoted by $\NN$ and the set of real numbers is denoted by $\RR$. We denote the set of nonnegative real numbers by $\RP := \menge{x \in \RR}{x \geq 0}$ and the set of the positive real numbers by $\RPP := \menge{x \in \RR}{x >0}$. The notation $A\colon X\rightrightarrows X$ to indicate that $A$ is a set-valued operator on $X$ and the notation $A\colon X\to X$ is to indicate that $A$ is a single-valued operator on $X$. 

Let $A\colon X\rightrightarrows X$ be an operator on $X$. Then its \emph{domain} is $\dom A :=\menge{x\in X}{Ax\neq \varnothing}$, its set of \emph{zeros} is $\zer A :=\menge{x\in X}{0\in Ax}$, and its set of \emph{fixed points} is $\Fix A :=\menge{x\in X}{x\in Ax}$. The \emph{graph} of $A$ is the set $\gra A :=\menge{(x,u)\in X\times X}{u\in Ax}$ and the \emph{inverse} of $A$, denoted by $A^{-1}$, is the operator with graph $\gra A^{-1} :=\menge{(u,x)\in X\times X}{u\in Ax}$. The \emph{resolvent} of $A$ is defined by
\begin{equation}
J_A :=(\Id+A)^{-1},
\end{equation}
where $\Id$ is the \emph{identity operator}.

Now, let $\eta,\gamma,\delta\in \RPP$ and set $\lambda :=1+\frac{\delta}{\gamma}$. In order to address problem \eqref{e:zero_ABC}, we employ the operator
\begin{equation}\label{e:TABC}
T_{A,B,C} :=\Id -\eta J_{\gamma A} +\eta J_{\delta B}\big((1-\lambda)\Id +\lambda J_{\gamma A} -\delta CJ_{\gamma A}\big).
\end{equation}
We will also refer to $\gamma$ and $\delta$ as the \emph{resolvent parameters} as they are used to \emph{scale} the operators $A,B$ in their respective resolvents. In fact, we adapt $\gamma$ and $\delta$ to the generalized monotonicity of $A$ and $B$ in order to guarantee the convergence of $T_{A,B,C}$. Intuitively, in the case $A$ and $B$ are maximally monotone, one would expect the use of equal resolvent parameters $\gamma=\delta$, and in other cases, $\gamma$ are $\delta$ are no longer the same. This phenomenon was initially observed in \cite{DP19,DP20}. Although the imbalance of monotonicity can be resolved by shifting the identity between the operators as in \cite[Remark~4.15]{DP19}, our plan is to conduct the convergence analysis of the algorithm applied to the \emph{original} operators.

To motivate the use of \eqref{e:TABC}, the following result shows the relationship between the fixed point set of $T_{A,B,C}$ and the solution set of \eqref{e:zero_ABC}.

\begin{proposition}[fixed points of $T_{A,B,C}$]
\label{p:fix}
Let $T_{A,B,C}$ be defined by \eqref{e:TABC}.
Then $\Fix T_{A,B,C}\neq \varnothing$ if and only if $\zer(A+B+C)\neq\varnothing$. Moreover, if $J_{\gamma A}$ is single-valued, then 
\begin{equation}
J_{\gamma A}(\Fix T_{A,B,C}) =\zer(A+B+C).
\end{equation} 
\end{proposition}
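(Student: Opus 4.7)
The plan is to characterize $\Fix T_{A,B,C}$ by unfolding the definition \eqref{e:TABC} and using the defining property of the resolvents. The key algebraic identity to exploit throughout is $\lambda - 1 = \delta/\gamma$.

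First I would write what $z\in T_{A,B,C}(z)$ means in set-valued terms: there exist $x\in J_{\gamma A}(z)$ and $y\in J_{\delta B}\big((1-\lambda)z+\lambda x-\delta Cx\big)$ such that $z = z-\eta x+\eta y$. Since $\eta>0$, this collapses to $y=x$, so a fixed point is equivalently characterized by the existence of $x\in X$ satisfying
\begin{equation*}
x\in J_{\gamma A}(z)\quad\text{and}\quad x\in J_{\delta B}\bigl((1-\lambda)z+\lambda x-\delta Cx\bigr).
\end{equation*}
Translating each resolvent inclusion through $u\in J_{M}(v)\Longleftrightarrow v-u\in Mu$, the first becomes $\tfrac{z-x}{\gamma}\in Ax$ and the second, after rearranging and using $\lambda-1=\delta/\gamma$, becomes $-\tfrac{z-x}{\gamma}-Cx\in Bx$. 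Adding these two inclusions yields $0\in Ax+Bx+Cx$, i.e.\ $x\in\zer(A+B+C)$.

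This immediately gives the forward direction of the equivalence: if $\Fix T_{A,B,C}\neq\varnothing$, pick any $z\in\Fix T_{A,B,C}$ and the associated $x$ to obtain an element of $\zer(A+B+C)$. It also gives the $\subseteq$ inclusion of the moreover part, since when $J_{\gamma A}$ is single-valued the element $x$ is uniquely $J_{\gamma A}(z)$. For the reverse direction I would run the argument backwards: given $x\in\zer(A+B+C)$, choose $a\in Ax$ and $b\in Bx$ with $a+b+Cx=0$, and set $z:=x+\gamma a$. Then $z-x=\gamma a\in\gamma Ax$ gives $x\in J_{\gamma A}(z)$, and a direct computation shows
\begin{equation*}
(1-\lambda)z+\lambda x-\delta Cx - x = -\tfrac{\delta}{\gamma}(z-x)-\delta Cx = -\delta a-\delta Cx = \delta b\in\delta Bx,
\end{equation*}
so $x\in J_{\delta B}\big((1-\lambda)z+\lambda x-\delta Cx\big)$. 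Hence $z\in\Fix T_{A,B,C}$, proving $\Fix T_{A,B,C}\neq\varnothing$ and, when $J_{\gamma A}$ is single-valued, $J_{\gamma A}(z)=x$, which yields the $\supseteq$ inclusion.

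I do not anticipate a serious obstacle here; the proof is essentially bookkeeping once one identifies $x$ as a stand-in for $J_{\gamma A}(z)$. The most delicate point is the set-valued interpretation of $T_{A,B,C}$ when $J_{\gamma A}$ is merely a relation: one must be careful that the ``$y=x$'' step remains valid and that the selection of $a\in Ax$, $b\in Bx$ in the converse matches the selection inside the resolvents. Beyond this, everything reduces to applying $\lambda-1=\delta/\gamma$ and the resolvent identity.
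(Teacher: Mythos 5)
Your proposal is correct and follows essentially the same route as the paper: both unfold the fixed-point condition into the existence of a selection $a\in J_{\gamma A}x$ satisfying the two resolvent inclusions, translate these via $u\in J_M(v)\iff v-u\in Mu$ together with $\lambda-1=\delta/\gamma$, and add them to get $0\in Aa+Ba+Ca$. The only cosmetic difference is that you make the converse construction $z=x+\gamma a$ explicit, where the paper packages both directions into a single chain of equivalences.
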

\begin{proof}
Let $x\in \dom T_{A,B,C}$. We have 
\begin{equation}
T_{A,B,C} x =\menge{x -\eta a +\eta J_{(\lambda-1)\gamma B}\big((1-\lambda)x +\lambda a -(\lambda-1)\gamma Ca\big)}{a\in J_{\gamma A}x}.
\end{equation}
Therefore,
\begin{subequations}
\begin{align}
x\in \Fix T_{A,B,C}
&\iff \exists a\in J_{\gamma A}x,\quad a\in J_{(\lambda-1)\gamma B}\big((1-\lambda)x +\lambda a -(\lambda-1)\gamma Ca\big)\\
&\iff \exists a\in J_{\gamma A}x,\quad (1-\lambda)x +\lambda a -(\lambda-1)\gamma Ca -a\in (\lambda-1)\gamma Ba\\
&\iff \exists a\in X,\quad x-a\in \gamma Aa \text{~and~} a-x\in Ba+Ca\\
&\iff \exists a\in J_{\gamma A}x\cap \zer(A+B+C), 
\end{align}
\end{subequations}
which completes the proof.
\end{proof}

In the rest of this section, we recall some preliminary concepts and results. Let $T\colon X\to X$ be a single-valued operator on $X$. Then $T$ is \emph{nonexpansive} if it is Lipschitz continuous with constant $1$ on its domain, i.e.,
\begin{equation}
\forall x, y\in \dom T,\quad \|Tx-Ty\|\leq \|x-y\|.
\end{equation}
The operator $T$ is said to be \emph{conically averaged} with constant $\theta\in \RPP$ (see \cite{BDP19, BMW20}), if there exists a nonexpansive operator $N\colon X\to X$ such that 
\begin{equation}
T =(1-\theta)\Id +\theta N.
\end{equation}
Given a conically $\theta$-averaged operator, it is $\theta$-averaged when $\theta\in \left]0,1\right[$, and nonexpansive when $\theta =1$. Some more properties on conical averagedness are discussed in the following proposition.

\begin{proposition}
\label{p:coavg}
Let $T\colon X\to X$, $\theta\in \RPP$, and $\lambda\in \RPP$. Then the following are equivalent:
\begin{enumerate}
\item\label{p:coavg_ori}
$T$ is conically $\theta$-averaged.
\item\label{p:coavg_rlx} 
$(1-\lambda)\Id +\lambda T$ is conically $\lambda\theta$-averaged.  
\item\label{p:coavg_ine} 
For all $x,y\in \dom T$, 
\begin{equation}
\|Tx-Ty\|^2\leq \|x-y\|^2 -\left(\frac{1}{\theta}-1\right)\|(\Id-T)x -(\Id-T)y\|^2.
\end{equation}
\end{enumerate}
\end{proposition}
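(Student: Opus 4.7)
The statement is the standard toolkit of conical averagedness, so I would organize the proof as a chain \textbf{(i) $\Leftrightarrow$ (ii)} and \textbf{(i) $\Leftrightarrow$ (iii)}, both via direct algebraic manipulation. Nothing here should require deep ideas, so I would aim for short, explicit computations.

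\textbf{(i) $\Leftrightarrow$ (ii).} This is essentially substitution. If $T=(1-\theta)\Id+\theta N$ with $N$ nonexpansive, then
\begin{equation*}
(1-\lambda)\Id+\lambda T = (1-\lambda)\Id+\lambda(1-\theta)\Id+\lambda\theta N = (1-\lambda\theta)\Id+\lambda\theta N,
\end{equation*}
which exhibits $(1-\lambda)\Id+\lambda T$ as conically $\lambda\theta$-averaged with the same nonexpansive witness $N$. Conversely, if $(1-\lambda)\Id+\lambda T=(1-\lambda\theta)\Id+\lambda\theta N'$ for some nonexpansive $N'$, then solving for $T$ yields $T=(1-\theta)\Id+\theta N'$ (here I would just note that $\lambda\in\RPP$ guarantees one can divide by $\lambda$, and $\lambda\theta\in\RPP$ is automatic so the new averaging constant is admissible).

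\textbf{(i) $\Leftrightarrow$ (iii).} The plan is to define the would-be averaging operator $N:=(1-\tfrac{1}{\theta})\Id+\tfrac{1}{\theta}T$, so that $T=(1-\theta)\Id+\theta N$ holds by construction regardless of whether $N$ is nonexpansive. Then (i) is equivalent to $N$ being nonexpansive, and I only need to translate nonexpansiveness of $N$ into inequality (iii). For arbitrary $x,y\in\dom T$, apply the scalar-affine-combination identity
\begin{equation*}
\|\alpha a+(1-\alpha)b\|^2 = \alpha\|a\|^2+(1-\alpha)\|b\|^2-\alpha(1-\alpha)\|a-b\|^2,
\end{equation*}
valid for \emph{every} real $\alpha$, with $a:=Tx-Ty$, $b:=x-y$, and $\alpha:=\tfrac{1}{\theta}$. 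Since $Nx-Ny=\tfrac{1}{\theta}(Tx-Ty)+(1-\tfrac{1}{\theta})(x-y)$, this gives
\begin{equation*}
\|Nx-Ny\|^2 = \tfrac{1}{\theta}\|Tx-Ty\|^2+(1-\tfrac{1}{\theta})\|x-y\|^2-\tfrac{1}{\theta}(1-\tfrac{1}{\theta})\|(\Id-T)x-(\Id-T)y\|^2.
\end{equation*}
Subtracting $\|x-y\|^2$ from both sides and multiplying by $\theta>0$ then shows that $\|Nx-Ny\|\le\|x-y\|$ is equivalent to the inequality in (iii). Doing the implications in both directions closes the loop.

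The only place I would slow down is the identity step: because $\theta$ is only required to lie in $\RPP$, the coefficient $\tfrac{1}{\theta}$ may exceed $1$ and so the combination defining $N$ is not a convex combination. That is why I want to invoke the \emph{algebraic} (not convex-analytic) form of the parallelogram-type identity, which holds for arbitrary real $\alpha$. Apart from that, everything is routine manipulation, and no external results beyond the definition of conical averagedness are needed.
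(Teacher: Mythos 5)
Your proof is correct. Note that the paper itself gives no argument for this proposition; it simply cites \cite[Proposition~2.2]{BDP19}, so there is no internal proof to compare against. Your self-contained argument is the standard one and all steps check out: the equivalence (i) $\Leftrightarrow$ (ii) is an exact substitution (using $\lambda>0$ to solve back for $T$, and $\lambda\theta\in\RPP$ so the constant is admissible), and for (i) $\Leftrightarrow$ (iii) your candidate $N:=(1-\tfrac{1}{\theta})\Id+\tfrac{1}{\theta}T$ is indeed the \emph{unique} operator satisfying $T=(1-\theta)\Id+\theta N$ for the given $\theta$ (worth stating explicitly, since the forward implication needs that any nonexpansive witness must coincide with this $N$), and the algebraic identity $\|\alpha a+(1-\alpha)b\|^2=\alpha\|a\|^2+(1-\alpha)\|b\|^2-\alpha(1-\alpha)\|a-b\|^2$, valid for all real $\alpha$, converts $\|Nx-Ny\|\le\|x-y\|$ into exactly the inequality in (iii) after multiplying through by $\theta>0$. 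Your remark about needing the identity in its algebraic rather than convex-combination form, because $\tfrac{1}{\theta}$ may exceed $1$, is precisely the right point to flag.
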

\begin{proof}
See \cite[Proposition~2.2]{BDP19}.
\end{proof}

Recall from \cite{DP19} that an operator $A\colon X\rightrightarrows X$ is \emph{$\alpha$-monotone} with $\alpha\in \RR$ if
\begin{equation}
\forall (x,u), (y,v)\in\gra A,\quad
\scal{x-y}{u-v}\geq \alpha\|x-y\|^2.
\end{equation}
We say that $A$ is \emph{monotone} if $\alpha =0$, \emph{strongly monotone} if $\alpha >0$, and \emph{weakly monotone} if $\alpha <0$. The operator $A$ is said to be \emph{maximally $\alpha$-monotone} if it is $\alpha$-monotone and there is no $\alpha$-monotone operator $B\colon X\rightrightarrows X$ such that $\gra B$ properly contains $\gra A$.

We say that $A$ is $\sigma$-\emph{cocoercive} if $\sigma\in\RPP$ and
\begin{equation}
\forall (x,u), (y,v)\in\gra A,\quad
\scal{x-y}{u-v}\geq \sigma\|u-v\|^2.
\end{equation}
Clearly, if $A$ is $\sigma$-cocoercive, then $A$ is single-valued and monotone. In fact, $\sigma$-cocoercivity was extended to $\sigma$-comonotonicity to allows for negative parameter $\sigma$, see \cite{BDP19,BMW20} for more details.

\begin{proposition}[single-valued and full domain]
\label{p:resol-mono}
Let $A\colon X\rightrightarrows X$ be $\alpha$-monotone and let $\gamma\in\RPP$ such that $1+\gamma\alpha >0$. Then the following hold:
\begin{enumerate}
\item\label{p:resol-mono_single} 
$J_{\gamma A}$ is single-valued and $(1+\gamma\alpha)$-cocoercive. 
\item 
$\dom J_{\gamma A}=X$ if and only if $A$ is maximally $\alpha$-monotone.
\end{enumerate}
\end{proposition}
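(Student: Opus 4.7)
The plan is to treat the two parts in order, using the defining relation $p\in J_{\gamma A}x \iff (x-p)/\gamma \in Ap$ and, for the second part, a shift reduction to the monotone setting where the classical Minty theorem applies.

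For \ref{p:resol-mono_single}, I would pick any $x_1,x_2\in \dom J_{\gamma A}$ and $p_i\in J_{\gamma A}x_i$, so that $(p_i,(x_i-p_i)/\gamma)\in \gra A$ for $i\in\{1,2\}$. Applying $\alpha$-monotonicity to these two pairs and clearing $\gamma>0$ yields
\begin{equation*}
\scal{p_1-p_2}{x_1-x_2} \geq (1+\gamma\alpha)\|p_1-p_2\|^2.
\end{equation*}
Since $1+\gamma\alpha>0$, setting $x_1=x_2$ forces $p_1=p_2$, so $J_{\gamma A}$ is single-valued; with single-valuedness in hand, the same inequality is exactly the $(1+\gamma\alpha)$-cocoercivity statement.

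For \ref{p:resol-mono_single}(ii) (the second item), the plan is to reduce to the ordinary monotone case. Define $\widetilde{A} := A - \alpha\Id$ (meaning $\widetilde{A}x = Ax - \alpha x$); then $\widetilde A$ is monotone, and one checks directly from the definition that $A$ is maximally $\alpha$-monotone if and only if $\widetilde A$ is maximally monotone. Writing
\begin{equation*}
\Id + \gamma A = (1+\gamma\alpha)\Id + \gamma\widetilde A = (1+\gamma\alpha)\bigl(\Id + \widetilde\gamma\,\widetilde A\bigr), \qquad \widetilde\gamma := \tfrac{\gamma}{1+\gamma\alpha}>0,
\end{equation*}
and using that multiplication by the positive scalar $1+\gamma\alpha$ preserves the property of being equal to $X$, we get $\dom J_{\gamma A}=\ran(\Id+\gamma A)=X$ if and only if $\ran(\Id+\widetilde\gamma\,\widetilde A)=X$. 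By Minty's theorem the latter is equivalent to maximal monotonicity of $\widetilde A$, hence to maximal $\alpha$-monotonicity of $A$.

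The routine work in \ref{p:resol-mono_single} is already isolated in a single inequality, so the only potentially delicate point is \ref{p:resol-mono_single}(ii): one must make sure the equivalence ``$A$ maximally $\alpha$-monotone $\iff$ $\widetilde A$ maximally monotone'' is genuinely just a rewriting of the graph condition, and that the scaling step is legitimate (which it is because $(1+\gamma\alpha)X=X$). Once those two small verifications are made, the statement follows from Minty's theorem and no further technical obstacle arises.
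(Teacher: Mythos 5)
Your proof is correct, and it is essentially the argument behind the paper's one-line citation: the paper simply defers to \cite[Lemma~3.3 and Proposition~3.4]{DP19}, where part~(i) is obtained by the same direct resolvent computation and part~(ii) by the same shift $A\mapsto A-\alpha\Id$ combined with Minty's theorem. Nothing further is needed.
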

\begin{proof}
See \cite[Lemma~3.3 and Proposition~3.4]{DP19}.
\end{proof}

Finally, we recall the \emph{demiclosedness principle} for cocoercive operators developed in \cite{BCP20}. A fundamental result in the theory of nonexpansive mapping is Browder's celebrated demiclosedness principle \cite{Bro68}. It was extended for finitely many firmly nonexpansive mappings in \cite{Bau13}, and was later generalized in \cite{BCP20} for a finite family of conically averaged mappings or for a finite family of cocoercive mappings. An instant application of the demiclosedness principles is to provide a simple proof for the weak convergence of the shadow sequence of the Douglas--Rachford algorithm \cite{Bau13}, and of the adaptive Douglas--Rachford algorithm \cite{BCP20}. For our analysis, we recall only the result for two operators.

\begin{proposition}[demiclosedness principle for balanced cocoercive operators]
\label{p:demi}
Let $T_1: X\to X$ and $T_2: X\to X$ be respectively $\sigma_1$- and $\sigma_2$-cocoercive, let $(x_n)_{n\in \mathbb{N}}$ and $(z_n)_{n\in \mathbb{N}}$ be sequences in $X$, and let $\rho_1,\rho_2\in\RPP$ be such that
\begin{equation}
\frac{\rho_1\sigma_1+\rho_2\sigma_2}{\rho_1+\rho_2}\geq 1.
\end{equation}
Suppose that as $n\to +\infty$,
\begin{subequations}
\begin{align}
x_n\rightharpoonup x^*,\quad z_n\rightharpoonup z^*,\\
T_1x_n\rightharpoonup y^*,\quad T_2z_n\rightharpoonup y^*,\\
\rho_1(x_n-T_1x_n) + \rho_2(z_n-T_2z_n)\to \rho_1(x^*-y^*)+\rho_2(z^*-y^*).
\end{align}
\end{subequations}
Then $y^* =T_1x^* =T_2z^*$.
\end{proposition}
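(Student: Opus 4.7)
The strategy is to weight the cocoercivity inequalities for $T_1$ and $T_2$ by $\rho_1$ and $\rho_2$, sum them, and pass to the limit using the strong-convergence hypothesis, weak lower semicontinuity of the norm, and the balance condition $\rho_1 \sigma_1 + \rho_2 \sigma_2 \geq \rho_1 + \rho_2$.

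I would begin by applying the $\sigma_i$-cocoercivity of $T_i$ at the pairs $(x_n, x^*)$ and $(z_n, z^*)$. Setting $p_n := T_1 x_n - T_1 x^*$, $q_n := T_2 z_n - T_2 z^*$, $w_n^{(1)} := (x_n - T_1 x_n) - (x^* - T_1 x^*)$, $w_n^{(2)} := (z_n - T_2 z_n) - (z^* - T_2 z^*)$, and using $x_n - x^* = p_n + w_n^{(1)}$ together with its analogue for $z_n$, the two cocoercivity inequalities, after being weighted by $\rho_i$ and summed, reduce to
\begin{equation*}
\rho_1(\sigma_1-1)\|p_n\|^2 + \rho_2(\sigma_2-1)\|q_n\|^2 \leq \rho_1 \langle w_n^{(1)}, p_n\rangle + \rho_2 \langle w_n^{(2)}, q_n\rangle.
\end{equation*}
Letting $p_\infty := y^* - T_1 x^*$ and $q_\infty := y^* - T_2 z^*$ denote the vectors we wish to show vanish, the hypotheses give $p_n \rightharpoonup p_\infty$, $q_n \rightharpoonup q_\infty$, $w_n^{(1)} \rightharpoonup -p_\infty$, $w_n^{(2)} \rightharpoonup -q_\infty$, and the strong-convergence assumption rewrites as $\rho_1 w_n^{(1)} + \rho_2 w_n^{(2)} \to -(\rho_1 p_\infty + \rho_2 q_\infty)$ in norm.

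Passing to the limit, on the left-hand side the expansions $\|p_n\|^2 = \|p_\infty\|^2 + \|p_n - p_\infty\|^2 + o(1)$ (from $\langle p_n - p_\infty, p_\infty\rangle \to 0$) and its analogue for $q_n$, together with weak lower semicontinuity, extract a contribution $\rho_1\sigma_1\|p_\infty\|^2 + \rho_2\sigma_2\|q_\infty\|^2$ plus a nonnegative residual in $\|p_n - p_\infty\|^2$ and $\|q_n - q_\infty\|^2$. On the right-hand side, I would use the identity
\begin{equation*}
\rho_1 \langle w_n^{(1)}, p_n\rangle + \rho_2 \langle w_n^{(2)}, q_n\rangle = \tfrac{1}{\rho_1+\rho_2}\langle \rho_1 w_n^{(1)} + \rho_2 w_n^{(2)},\, \rho_1 p_n + \rho_2 q_n\rangle + \tfrac{\rho_1 \rho_2}{\rho_1+\rho_2}\langle w_n^{(1)} - w_n^{(2)},\, p_n - q_n\rangle
\end{equation*}
to split it into a strong-times-weak inner product, which converges to $-\|\rho_1 p_\infty + \rho_2 q_\infty\|^2/(\rho_1+\rho_2)$, and a weak-times-weak remainder. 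Combined with the balance condition, these ingredients should yield $\rho_1\sigma_1\|p_\infty\|^2 + \rho_2\sigma_2\|q_\infty\|^2 \leq 0$; since $\sigma_1,\sigma_2 > 0$, this forces $p_\infty = q_\infty = 0$ and hence $T_1 x^* = y^* = T_2 z^*$.

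The main obstacle is controlling the weak-weak remainder $\langle w_n^{(1)} - w_n^{(2)},\, p_n - q_n\rangle$, whose two factors converge only weakly (to $q_\infty - p_\infty$ and $p_\infty - q_\infty$). I expect to absorb this term by rewriting via $w_n^{(i)} = (x_n - x^*) - p_n$ and exploiting the crucial fact that $T_1 x_n$ and $T_2 z_n$ share the \emph{same} weak limit $y^*$ (so that $p_n - q_n \rightharpoonup T_2 z^* - T_1 x^*$); the balance $\rho_1\sigma_1 + \rho_2\sigma_2 \geq \rho_1 + \rho_2$ provides precisely the slack needed to neutralize any residual contribution.
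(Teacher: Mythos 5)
There is a genuine gap, and it sits exactly at the point you flag as ``the main obstacle.'' Your setup is fine: the weighted cocoercivity inequalities, the identification of the weak limits of $p_n$, $q_n$, $w_n^{(1)}$, $w_n^{(2)}$, and the splitting of $\rho_1\langle w_n^{(1)},p_n\rangle+\rho_2\langle w_n^{(2)},q_n\rangle$ into a strong--weak part plus the remainder $\tfrac{\rho_1\rho_2}{\rho_1+\rho_2}\langle w_n^{(1)}-w_n^{(2)},\,p_n-q_n\rangle$ are all correct. But the argument never actually disposes of that remainder. Rewriting $w_n^{(i)}=(x_n-x^*)-p_n$ (resp.\ $(z_n-z^*)-q_n$) turns it into $\langle (x_n-x^*)-(z_n-z^*),\,p_n-q_n\rangle-\|p_n-q_n\|^2$; the second piece has the right sign via weak lower semicontinuity, but the first is an inner product of two sequences that converge only weakly (to $0$ and to $p_\infty-q_\infty$), and such products need not converge to the product of the limits. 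The sequences $(x_n)$ and $(z_n)$ are completely unrelated in the hypotheses, so nothing forces this cross term to vanish or even to have the right sign asymptotically. The balance condition $\rho_1\sigma_1+\rho_2\sigma_2\geq\rho_1+\rho_2$ cannot rescue this: it constrains coefficients of $\|p_n\|^2$ and $\|q_n\|^2$ separately, and since the proposition allows $\sigma_i<1$ for one of the operators, your claim that the residual $\rho_1(\sigma_1-1)\|p_n-p_\infty\|^2+\rho_2(\sigma_2-1)\|q_n-q_\infty\|^2$ is nonnegative is also false in general; one would additionally need $\|p_n-p_\infty\|$ and $\|q_n-q_\infty\|$ to be asymptotically comparable, which is not available. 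In short, the entire nontrivial content of the demiclosedness principle is precisely the mechanism for neutralizing this weak--weak interaction, and your proposal postpones rather than supplies it.

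For comparison: the paper does not prove this proposition at all --- it is quoted verbatim from \cite{BCP20} (Theorem~3.2 there, specialized to two operators), whose proof descends from Bauschke's demiclosedness principle \cite{Bau13}. That argument works in the weighted product space $X\times X$, exploits that $T_1x_n$ and $T_2z_n$ converge weakly to the \emph{same} point (so the limit lies on the diagonal), and uses the orthogonal decomposition along the diagonal subspace together with the strong convergence of the diagonal component $\rho_1w_n^{(1)}+\rho_2w_n^{(2)}$ to eliminate the cross term. If you want a self-contained proof here, you would need to import that product-space/diagonal-projection device rather than the direct limit passage you sketch.
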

\begin{proof}
Apply~\cite[Theorem~3.2]{BCP20} for two operators.
\end{proof}

\section{An abstract convergence result}
\label{s:abstrt}

In order to study $T_{A,B,C}$, it is reasonable to consider the general operator
\begin{equation}\label{e:T}
T :=\Id -\eta T_1 +\eta T_2(-\nu\Id +\lambda T_1 -\delta T_3T_1),
\end{equation}
where $T_1,T_2,T_3\colon X\to X$ and $\eta,\nu,\lambda,\delta\in \RPP$. In this section, we establish a convergence result for the operator $T$ under the cocoercivity of $T_1,T_2,T_3$. We begin with a useful technical lemma.

\begin{lemma}
\label{l:abc}
Let $a,b,c,d$ be in $X$ and let $\eta,\nu, \lambda, \delta$ be in $\RPP$. Set $e :=-\nu a +\lambda b -\delta c$ and $f :=a -\eta b +\eta d$. Then, for all $\sigma\in \RPP$,
\begin{align}
\|f\|^2 &=\|a\|^2 -\left(\frac{\lambda}{\eta\nu} -\frac{\delta}{2\eta\nu\sigma} -1\right)\|a-f\|^2 -\frac{\delta}{2\eta\nu\sigma}\|a-f -2\eta\sigma c\|^2 \notag \\
&\qquad +\frac{\lambda\eta}{\nu}\|b\|^2 +\frac{\lambda\eta}{\nu}\|d\|^2 -2\eta\scal{a}{b} -2\frac{\eta}{\nu}\scal{e}{d} -\frac{2\delta\eta}{\nu}(\scal{c}{b} -\sigma\|c\|^2).
\end{align}
\end{lemma}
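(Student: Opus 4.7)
The statement is a purely algebraic identity in the inner product space $X$; no monotonicity, cocoercivity, or completeness of $X$ is used. My plan is to expand both sides into the ten multilinear monomials
$$\|a\|^2,\ \|b\|^2,\ \|c\|^2,\ \|d\|^2,\ \scal{a}{b},\ \scal{a}{c},\ \scal{a}{d},\ \scal{b}{c},\ \scal{b}{d},\ \scal{c}{d},$$
and match coefficients.

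First, I would exploit the simplifications that come straight from the definition of $f$, namely
$$a - f = \eta(b-d) \qquad\text{and}\qquad a - f - 2\eta\sigma c = \eta(b - d - 2\sigma c).$$
These immediately give $\|a-f\|^2 = \eta^2(\|b\|^2 - 2\scal{b}{d} + \|d\|^2)$ and reduce $\|a-f-2\eta\sigma c\|^2$ to $\eta^2$ times a polynomial in $b, c, d$ with explicit coefficients depending on $\sigma$. Next, I would substitute $\scal{e}{d} = -\nu\scal{a}{d} + \lambda\scal{b}{d} - \delta\scal{c}{d}$, coming from the definition of $e$, so that the $-2\frac{\eta}{\nu}\scal{e}{d}$ term on the right becomes an explicit combination of $\scal{a}{d}$, $\scal{b}{d}$, and $\scal{c}{d}$. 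Finally, I would expand $\|f\|^2 = \|a - \eta b + \eta d\|^2$ on the left by the polarization identity.

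With all expansions in hand, the remaining step is bookkeeping: collect the coefficient of each of the ten monomials on each side and verify agreement. The coefficients of $\|a\|^2$, $\scal{a}{b}$, and $\scal{a}{d}$ are immediate. The coefficients of $\|b\|^2$ and $\|d\|^2$ both reduce to $\eta^2$ after the $\frac{\delta}{2\eta\nu\sigma}$-contributions from the two squared-norm terms on the right cancel and the $\pm\frac{\lambda\eta}{\nu}$ pieces telescope; similarly the coefficient of $\scal{b}{d}$ reduces to $-2\eta^2$.

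The only mildly delicate part — and what I expect to be the main obstacle — is showing that every ``$c$-coefficient'' (that of $\|c\|^2$, $\scal{b}{c}$, and $\scal{c}{d}$) vanishes on the right, since the left-hand side is $c$-free. These cancellations are precisely what the correction term $-\frac{2\delta\eta}{\nu}(\scal{c}{b}-\sigma\|c\|^2)$ has been engineered to arrange, working in concert with the $\sigma$-dependent coefficient $-\frac{\delta}{2\eta\nu\sigma}$ in front of $\|a-f-2\eta\sigma c\|^2$ and with the $-\delta\scal{c}{d}$ contribution from $\scal{e}{d}$. No individual step is hard, but the $\sigma$-dependent coefficients interlock carefully, so introducing $a-f$ and $a-f-2\eta\sigma c$ explicitly before comparing monomials is the cleanest route and the safest against arithmetic slips.
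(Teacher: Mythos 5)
Your proposal is correct: the identity is indeed a purely multilinear statement, and expanding both sides over the ten monomials $\|a\|^2,\dots,\scal{c}{d}$ after substituting $a-f=\eta(b-d)$, $a-f-2\eta\sigma c=\eta(b-d-2\sigma c)$, and $\scal{e}{d}=-\nu\scal{a}{d}+\lambda\scal{b}{d}-\delta\scal{c}{d}$ does verify it — I checked the bookkeeping and every cancellation you predict (in particular the vanishing of the $\|c\|^2$, $\scal{b}{c}$, and $\scal{c}{d}$ coefficients against the term $-\tfrac{2\delta\eta}{\nu}(\scal{c}{b}-\sigma\|c\|^2)$ and the $\scal{e}{d}$ contribution) goes through. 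The route is genuinely different in organization from the paper's, though it rests on the same two observations. The paper runs a forward derivation: it starts from $\tfrac{\lambda}{\eta^2}\|a-f\|^2=\lambda\|b-d\|^2$, replaces $\lambda b$ by $\nu a+\delta c+e$ in the cross term $-2\lambda\scal{b}{d}$, then rewrites $-2\nu\scal{a}{d}$ and $-2\delta\scal{c}{d}$ via $d=b-\tfrac1\eta(a-f)$ (the latter completed to the square $\|a-f-2\eta\sigma c\|^2$), and finally solves for $\|f\|^2$. That derivation explains where the unusual grouping of terms on the right-hand side comes from, which is useful since the same decomposition is reused in Proposition~\ref{p:abs}; your coefficient-matching argument is shorter to state and safer against sign errors, but treats the right-hand side as given rather than derived. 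Either is acceptable as a proof of the lemma as stated.
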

\begin{proof}
By assumption, $a-f =\eta(b-d)$ and $\lambda b =\nu a +\delta c +e$, which imply that  
\begin{subequations}\label{e:a-f}
\begin{align}
\frac{\lambda}{\eta^2}\|a-f\|^2 &=\lambda\|b-d\|^2 \\
&=\lambda\|b\|^2 +\lambda\|d\|^2 -2\lambda\scal{b}{d} \\
&=\lambda\|b\|^2 +\lambda\|d\|^2 -2\scal{\nu a +\delta c +e}{d} \\
&=\lambda\|b\|^2 +\lambda\|d\|^2 -2\nu\scal{a}{d} -2\delta\scal{c}{d} -2\scal{e}{d}. 
\end{align}
\end{subequations}
Writing $d =b -\frac{1}{\eta}(a-f)$, we have that
\begin{equation}\label{e:ad}
-2\nu\scal{a}{d}=-2\nu\scal{a}{b} +\frac{2\nu}{\eta}\scal{a}{a-f}
=-2\nu\scal{a}{b} +\frac{\nu}{\eta}(\|a\|^2 +\|a-f\|^2 -\|f\|^2)
\end{equation}
and that
\begin{subequations}\label{e:cd}
\begin{align}
-2\delta\scal{c}{d} &= -2\delta\scal{c}{b} +\frac{2\delta}{\eta}\scal{c}{a-f} \\
&=-2\delta(\scal{c}{b} -\sigma\|c\|^2)
- 2\delta\sigma\|c\|^2 +\frac{2\delta}{\eta}\scal{c}{a-f}\\
&= -2\delta(\scal{c}{b} -\sigma\|c\|^2)
-\frac{\delta}{2\eta^2\sigma}\|a-f -2\eta\sigma c\|^2 +\frac{\delta}{2\eta^2\sigma}\|a-f\|^2. 
\end{align}
\end{subequations}
Substituting \eqref{e:ad} and \eqref{e:cd} into \eqref{e:a-f} yields
\begin{align}
\frac{\nu}{\eta}\|f\|^2 &=\frac{\nu}{\eta}\|a\|^2 -\left(\frac{\lambda}{\eta^2} -\frac{\delta}{2\eta^2\sigma} -\frac{\nu}{\eta}\right)\|a-f\|^2 -\frac{\delta}{2\eta^2\sigma}\|a-f -2\eta\sigma c\|^2 \notag \\
&\qquad +\lambda\|b\|^2 +\lambda\|d\|^2 -2\nu\scal{a}{b} -2\scal{e}{d} -2\delta(\scal{c}{b} -\sigma\|c\|^2),
\end{align}
which implies the conclusion.
\end{proof}

The following proposition is inspired by \cite[Proposition~2.1]{DY17}.

\begin{proposition}
\label{p:abs}
Let $T_1$, $T_2$, and $T_3$ be respectively $\sigma_1$-, $\sigma_2$-, and $\sigma_3$-cocoercive. Let $\eta, \nu,\lambda,\delta\in \RPP$ and define
\begin{equation}
T :=\Id -\eta T_1 +\eta T_2(-\nu\Id +\lambda T_1 -\delta T_3T_1).
\end{equation}
Then the following hold:
\begin{enumerate}
\item\label{p:abs_lambda=} 
If $\lambda =2\nu\sigma_1 =2\sigma_2$ and 
\begin{equation}
\eta^* :=\frac{1}{\nu}\left(\lambda-\frac{\delta}{2\sigma_3}\right)>0,
\end{equation}
then, for all $x,y\in \dom T$, 
\begin{align}\label{e:lambda=}
\|Tx-Ty\|^2&\leq \|x-y\|^2 -\left(\frac{\eta^*}{\eta}-1\right)\|(\Id-T)x-(\Id-T)y\|^2 \notag\\
&\quad -\frac{\delta}{2\eta\nu\sigma_3}\|(\Id-T)x-(\Id-T)y-2\eta\sigma_3(T_3T_1x-T_3T_1y)\|^2.
\end{align}

\item\label{p:abs_lambda<} 
If $\lambda <\nu\sigma_1+\sigma_2$ and 
\begin{equation}
\eta^* :=\frac{1}{\nu}\left(\frac{(2\nu\sigma_1-\lambda)(2\sigma_2-\lambda)}{2(\nu\sigma_1+\sigma_2-\lambda)}+\lambda-\frac{\delta}{2\sigma_3}\right)>0,
\end{equation} 
then, for all $x,y\in \dom T$, 
\begin{align}\label{e:lambda<}
\|Tx-Ty\|^2&\leq \|x-y\|^2 -\left(\frac{\eta^*}{\eta}-1\right)\|(\Id-T)x-(\Id-T)y\|^2 \notag\\
&-\frac{\delta}{2\eta\nu\sigma_3}\|(\Id-T)x-(\Id-T)y-2\eta\sigma_3(T_3T_1x-T_3T_1y)\|^2\notag\\
&-\frac{\eta}{2\nu(\nu\sigma_1+\sigma_2-\lambda)} \|(2\nu\sigma_1-\lambda)(T_1x-T_1y) +(2\sigma_2-\lambda)(T_2Sx-T_2Sy)\|^2,
\end{align}
where $S:=-\nu\Id+\lambda T_1-\delta T_3T_1$.
\end{enumerate}
In both cases, $T$ is conically $\frac{\eta}{\eta^*}$-averaged.
\end{proposition}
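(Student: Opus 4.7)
The plan is to apply Lemma~\ref{l:abc} with the substitutions $a := x-y$, $b := T_1 x - T_1 y$, $c := T_3 T_1 x - T_3 T_1 y$, and $d := T_2(Sx) - T_2(Sy)$, where $S := -\nu\Id + \lambda T_1 - \delta T_3 T_1$. These choices make $e = Sx - Sy$ and $f = Tx - Ty$ in the notation of Lemma~\ref{l:abc}, and setting $\sigma := \sigma_3$ produces an identity for $\|Tx-Ty\|^2$ whose only remaining inner-product terms are $\scal{a}{b}$, $\scal{Sx-Sy}{d}$, and $\scal{c}{b} - \sigma_3\|c\|^2$. Each of these is estimated using the cocoercivity hypotheses: $\scal{a}{b} \geq \sigma_1\|b\|^2$, $\scal{Sx-Sy}{d} \geq \sigma_2\|d\|^2$, and $\scal{c}{b} - \sigma_3\|c\|^2 \geq 0$. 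Plugging these in converts the identity into an inequality in which the surviving quadratic quantities involve $\|x-y\|^2$, $\|(\Id-T)x-(\Id-T)y\|^2$, $\|(\Id-T)x-(\Id-T)y - 2\eta\sigma_3 c\|^2$, $\|b\|^2$, and $\|d\|^2$.

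For part~\ref{p:abs_lambda=}, the hypothesis $\lambda = 2\nu\sigma_1 = 2\sigma_2$ forces the coefficients of $\|b\|^2$ and $\|d\|^2$ to vanish, delivering \eqref{e:lambda=} at once. For part~\ref{p:abs_lambda<}, I would set $p := 2\nu\sigma_1 - \lambda$ and $q := 2\sigma_2 - \lambda$, so that $p+q = 2(\nu\sigma_1 + \sigma_2 - \lambda) > 0$, and invoke the elementary algebraic identity
\begin{equation*}
p\|b\|^2 + q\|d\|^2 = \frac{1}{p+q}\|pb + qd\|^2 + \frac{pq}{p+q}\|b-d\|^2.
\end{equation*}
Since $b-d = \eta^{-1}((\Id-T)x-(\Id-T)y)$, the $\frac{pq}{p+q}\|b-d\|^2$ piece folds into the coefficient of $\|(\Id-T)x-(\Id-T)y\|^2$, while the $\frac{1}{p+q}\|pb+qd\|^2$ piece becomes the last term of \eqref{e:lambda<}. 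A short calculation confirms that the resulting coefficient of $\|(\Id-T)x-(\Id-T)y\|^2$ matches $-(\eta^*/\eta-1)$ for $\eta^*$ exactly as prescribed in~\ref{p:abs_lambda<}.

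For the concluding averagedness assertion, every term on the right-hand side of \eqref{e:lambda=} and \eqref{e:lambda<} beyond $\|x-y\|^2 - (\eta^*/\eta - 1)\|(\Id-T)x-(\Id-T)y\|^2$ carries a nonpositive coefficient (using $\delta,\nu,\sigma_3 > 0$ throughout and, in case~\ref{p:abs_lambda<}, $\nu\sigma_1+\sigma_2-\lambda > 0$). Discarding those nonpositive terms and invoking Proposition~\ref{p:coavg}\ref{p:coavg_ine} with $\theta := \eta/\eta^*$ yields that $T$ is conically $\eta/\eta^*$-averaged. The main obstacle is the bookkeeping in part~\ref{p:abs_lambda<}: tracking how the $\frac{pq}{p+q}\|b-d\|^2$ contribution combines with the pre-existing coefficient $\frac{\lambda}{\eta\nu} - \frac{\delta}{2\eta\nu\sigma_3} - 1$ supplied by Lemma~\ref{l:abc} so as to collapse cleanly into the stated expression for $\eta^*$.
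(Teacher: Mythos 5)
Your proposal is correct and follows essentially the same route as the paper's proof: the same substitutions into Lemma~\ref{l:abc} with $\sigma=\sigma_3$, the same three cocoercivity estimates, the same weighted-norm identity $p\|b\|^2+q\|d\|^2=\tfrac{1}{p+q}\|pb+qd\|^2+\tfrac{pq}{p+q}\|b-d\|^2$ combined with $b-d=\tfrac{1}{\eta}\big((\Id-T)x-(\Id-T)y\big)$ in case~\ref{p:abs_lambda<}, and the same appeal to Proposition~\ref{p:coavg}\ref{p:coavg_ine} for the averagedness conclusion. No gaps.
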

\begin{proof}
Let $x,y\in X$ be arbitrary and set $S :=-\nu\Id +\lambda T_1 -\delta T_3T_1$. Then $T =\Id -\eta T_1 +\eta T_2S$. Define
\begin{subequations}
\begin{align}
a &:=x-y, &b &:=T_1x-T_1y,\\
c &:=T_3T_1x-T_3T_1y, &d &:=T_2Sx-T_2Sy,\\
e &:=Sx-Sy, &f &:=Tx-Ty.
\end{align}
\end{subequations}
Then $e =-\nu a +\lambda b -\delta c$ and $f =a -\eta b +\eta d$. By applying Lemma~\ref{l:abc} (with $\sigma =\sigma_3$),
\begin{align}\label{e:abc}
\|f\|^2 &=\|a\|^2 -\left(\frac{\lambda}{\eta\nu} -\frac{\delta}{2\eta\nu\sigma_3} -1\right)\|a-f\|^2 -\frac{\delta}{2\eta\nu\sigma_3}\|a-f -2\eta\sigma c\|^2 \notag \\
&\qquad +\frac{\lambda\eta}{\nu}\|b\|^2 +\frac{\lambda\eta}{\nu}\|d\|^2 -2\eta\scal{a}{b} -2\frac{\eta}{\nu}\scal{e}{d} -\frac{2\delta\eta}{\nu}(\scal{c}{b} -\sigma_3\|c\|^2).
\end{align}
On the other hand, the cocoercivity of $T_1$, $T_2$, and $T_3$ yields
\begin{equation}
\scal{a}{b}\geq\sigma_1\|b\|^2,\quad
\scal{e}{d}\geq\sigma_2\|d\|^2,\quad
\scal{b}{c}\geq\sigma_3\|c\|^2.
\end{equation}
Combining with \eqref{e:abc}, we obtain that
\begin{align}\label{e:abc'}
\|f\|^2 &\leq\|a\|^2 -\left(\frac{\lambda}{\eta\nu} -\frac{\delta}{2\eta\nu\sigma_3} -1\right)\|a-f\|^2 -\frac{\delta}{2\eta\nu\sigma_3}\|a-f -2\sigma_3 c\|^2 \notag \\
&\qquad -\frac{\eta}{\nu}(2\nu\sigma_1-\lambda)\|b\|^2 -\frac{\eta}{\nu}(2\sigma_2-\lambda)\|d\|^2.
\end{align}

\ref{p:abs_lambda=}: Since $\lambda=2\nu\sigma_1=2\sigma_2$, \eqref{e:abc'} reduces to 
\begin{align}
\|f\|^2 &\leq\|a\|^2 -\left(\frac{\eta^*}{\eta} -1\right)\|a-f\|^2 -\frac{\delta}{2\eta\nu\sigma_3}\|a-f -2\sigma_3 c\|^2,
\end{align}
which gives \eqref{e:lambda=}.

\ref{p:abs_lambda<}: Set $\kappa :=2\nu\sigma_1-\lambda$ and $\mu :=2\sigma_2-\lambda$. Then $\kappa+\mu =2(\nu\sigma_1+\sigma_2-\lambda) >0$ and
\begin{subequations}
\begin{align}
&(2\nu\sigma_1-\lambda)\|b\|^2 +(2\sigma_2-\lambda)\|d\|^2 =\kappa\|b\|^2 +\mu\|d\|^2 \\ 
&=\frac{1}{\kappa+\mu}\|\kappa b+\mu d\|^2
+\frac{\kappa\mu}{\kappa+\mu}\|b-d\|^2 \\
&=\frac{1}{2(\nu\sigma_1+\sigma_2-\lambda)}\|(2\nu\sigma_1-\lambda)b +(2\sigma_2-\lambda)d\|^2 +\frac{(2\nu\sigma_1-\lambda)(2\sigma_2-\lambda)}{2\eta^2(\nu\sigma_1+\sigma_2-\lambda)}\|a-f\|^2,
\end{align}
\end{subequations}
where the last equality is due to the fact that $b-d= \frac{1}{\eta}(a-f)$. Substituting into \eqref{e:abc'}, we get
\begin{align}
\|f\|^2 &\leq\|a\|^2 -\left(\frac{(2\nu\sigma_1-\lambda)(2\sigma_2-\lambda)}{2\eta\nu(\nu\sigma_1+\sigma_2-\lambda)}+\frac{\lambda}{\eta\nu}-\frac{\delta}{2\eta\nu\sigma_3} -1\right)\|a-f\|^2\notag \\ 
&\quad -\frac{\delta}{2\eta\nu\sigma_3}\|a-f -2\eta\sigma_3 c\|^2
-\frac{\eta}{2\nu(\nu\sigma_1+\sigma_2-\lambda)} \|(2\nu\sigma_1-\lambda)b +(2\sigma_2-\lambda)d\|^2,
\end{align}
which proves \eqref{e:lambda<}.

Finally, in both cases \ref{p:abs_lambda=} and \ref{p:abs_lambda<}, we have that
\begin{equation}
\|Tx-Ty\|^2\leq \|x-y\|^2 -\left(\frac{\eta^*}{\eta}-1\right)\|(\Id-T)x-(\Id-T)y\|^2,
\end{equation}
which implies that $T$ is conically $\frac{\eta}{\eta^*}$-averaged due to Proposition~\ref{p:coavg}\ref{p:coavg_ori}\&\ref{p:coavg_ine}.
\end{proof}

\begin{theorem}[abstract convergence]
\label{t:abscvg}
Let $T_1$, $T_2$, and $T_3$ be respectively $\sigma_1$-, $\sigma_2$-, and $\sigma_3$-cocoercive. Let $\eta, \nu,\lambda,\delta\in \RPP$ and define
\begin{equation}
T :=\Id -\eta T_1 +\eta T_2(-\nu\Id +\lambda T_1 -\delta T_3T_1).
\end{equation}
Suppose that $\Fix T\neq \varnothing$ and that either
\begin{enumerate}[label ={\rm (\alph*)}]
\item\label{a:lambda=} 
$\lambda =2\nu\sigma_1 =2\sigma_2$ and $\eta <\eta^* :=\frac{1}{\nu}\left(\lambda-\frac{\delta}{2\sigma_3}\right)$; or
\item\label{a:lambda<} 
$\lambda <\nu\sigma_1+\sigma_2$ and $\eta <\eta^* :=\frac{1}{\nu}\left(\frac{(2\nu\sigma_1-\lambda)(2\sigma_2-\lambda)}{2(\nu\sigma_1+\sigma_2-\lambda)}+\lambda-\frac{\delta}{2\sigma_3}\right)$.
\end{enumerate} 
Let $(x_n)_{n\in\NN}\subset \dom T$ be a sequence in generated by $T$ and set $S :=-\nu\Id +\lambda T_1 -\delta T_3T_1$. Then the following hold:
\begin{enumerate}
\item\label{t:abscvg_T}
$T$ is $\frac{\eta}{\eta^*}$-averaged. Consequently, $(x_n)_{n\in \mathbb{N}}$ converges weakly to a point $x^*\in \Fix T$ and the rate of asymptotic regularity of $T$ is $o(1/\sqrt{n})$, \emph{i.e.}, $\|x_n-Tx_n\| =o(1/\sqrt{n})$ as $n\to +\infty$.

\item\label{t:abscvg_T3T1} 
$(T_3T_1x_n)_{n\in \mathbb{N}}$ converges strongly to $T_3T_1x^*$ and $T_3T_1(\Fix T) =\{T_3T_1x^*\}$.

\item\label{t:abscvg_T1a}
If \ref{a:lambda=} holds and $\nu =\lambda-1$, then $(T_1x_n)_{n\in \mathbb{N}}$ and $(T_2Sx_n)_{n\in \mathbb{N}}$ converge weakly to $T_1x^* =T_2Sx^*$.  

\item\label{t:abscvg_T1b}
If \ref{a:lambda<} holds, then $(T_1x_n)_{n\in \mathbb{N}}$ and $(T_2Sx_n)_{n\in \mathbb{N}}$ converge strongly to $T_1x^* =T_2Sx^*$ and $T_1(\Fix T) = T_2S(\Fix T) =\{T_1x^*\}$.
\end{enumerate}
\end{theorem}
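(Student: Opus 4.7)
The overall strategy is to leverage Proposition~\ref{p:abs}, which yields both conical averagedness of $T$ and the descent inequalities~\eqref{e:lambda=} and~\eqref{e:lambda<} that drive strong convergence of the auxiliary sequences. For part~\ref{t:abscvg_T}, since $\eta<\eta^*$ one has $\eta/\eta^*\in\left]0,1\right[$, so the conical $(\eta/\eta^*)$-averagedness is ordinary averagedness; standard fixed point theory for averaged operators then yields weak convergence of $(x_n)$ to some $x^*\in\Fix T$ together with the $o(1/\sqrt{n})$ rate of asymptotic regularity. For part~\ref{t:abscvg_T3T1}, apply the relevant inequality of Proposition~\ref{p:abs} with $x=x_n$ and $y=x^*$: the left-hand side becomes $\|x_{n+1}-x^*\|^2$, the extra nonnegative terms are summable, and in particular
\[
\|x_n-x_{n+1}-2\eta\sigma_3(T_3T_1x_n-T_3T_1x^*)\|\to 0.
\]
Combined with $\|x_n-x_{n+1}\|\to 0$, this gives $T_3T_1x_n\to T_3T_1x^*$ strongly. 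Evaluating the same inequality at any two fixed points $x^*,y^*$ makes the $(\Id-T)$-differences vanish and forces $T_3T_1x^*=T_3T_1y^*$.

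For part~\ref{t:abscvg_T1b}, inequality~\eqref{e:lambda<} under~\ref{a:lambda<} carries an additional residual involving $T_1$ and $T_2S$. The same Fej\'er-type summation yields strong convergence of
\[
(2\nu\sigma_1-\lambda)(T_1x_n-T_1x^*)+(2\sigma_2-\lambda)(T_2Sx_n-T_2Sx^*)\to 0.
\]
From the definition $Tx_n=x_n-\eta T_1x_n+\eta T_2Sx_n$ and part~\ref{t:abscvg_T}, we also have $T_1x_n-T_2Sx_n=(x_n-x_{n+1})/\eta\to 0$ strongly, and $T_1x^*=T_2Sx^*$ since $x^*\in\Fix T$. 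This gives a $2\times 2$ linear system in $T_1x_n-T_1x^*$ and $T_2Sx_n-T_2Sx^*$ whose determinant is $-2(\nu\sigma_1+\sigma_2-\lambda)\neq 0$ under~\ref{a:lambda<}, so both differences tend to zero strongly. Evaluating~\eqref{e:lambda<} at two fixed points again yields the claimed constancy on $\Fix T$.

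Part~\ref{t:abscvg_T1a} is the main obstacle: under~\ref{a:lambda=}, inequality~\eqref{e:lambda=} carries no $T_1$/$T_2S$ residual, so Proposition~\ref{p:demi} must be invoked. Since $(x_n)$ is bounded and $T_1$ is Lipschitz, extract a subsequence with $T_1x_{n_k}\rightharpoonup u^*$. The strong convergence $T_3T_1x_{n_k}\to T_3T_1x^*$ then yields $Sx_{n_k}\rightharpoonup z^*:=-\nu x^*+\lambda u^*-\delta T_3T_1x^*$, and writing $T_2Sx_n=T_1x_n+(x_{n+1}-x_n)/\eta$ gives $T_2Sx_{n_k}\rightharpoonup u^*$ as well. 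The decisive step is to choose $\rho_1,\rho_2>0$ so that $\rho_1(x_{n_k}-T_1x_{n_k})+\rho_2(Sx_{n_k}-T_2Sx_{n_k})$ converges \emph{strongly} to $\rho_1(x^*-u^*)+\rho_2(z^*-u^*)$. Expanding $S$ and substituting for $T_2Sx_n$ as above, the weakly-but-not-strongly convergent terms $x_n$ and $T_1x_n$ appear with coefficients $\rho_1-\rho_2\nu$ and $\rho_2(\lambda-1)-\rho_1$, both of which vanish simultaneously exactly when $\rho_1=\rho_2\nu$ and $\nu=\lambda-1$---precisely the hypothesis. Taking $\rho_1=\nu$ and $\rho_2=1$, the identities $\sigma_1=\lambda/(2\nu)$ and $\sigma_2=\lambda/2$ give $\rho_1\sigma_1+\rho_2\sigma_2=\lambda=\rho_1+\rho_2$, so the hypothesis of Proposition~\ref{p:demi} holds with equality; we conclude $u^*=T_1x^*=T_2Sx^*$. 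Since every weak cluster point of $(T_1x_n)$ equals $T_1x^*$, the whole sequence $(T_1x_n)$---and hence $(T_2Sx_n)$---converges weakly to $T_1x^*$.
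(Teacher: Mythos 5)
Your proposal is correct and follows essentially the same route as the paper: the descent inequality of Proposition~\ref{p:abs} combined with Fej\'er-type telescoping handles \ref{t:abscvg_T}, \ref{t:abscvg_T3T1}, and \ref{t:abscvg_T1b} (including the same $2\times 2$ system with determinant $-2(\nu\sigma_1+\sigma_2-\lambda)$), and the demiclosedness principle of Proposition~\ref{p:demi} with $\rho_1=\nu$, $\rho_2=1$ handles \ref{t:abscvg_T1a}. The only cosmetic difference is that you derive the constancy of $T_3T_1$, $T_1$, and $T_2S$ on $\Fix T$ by evaluating the inequality directly at pairs of fixed points, whereas the paper lets $y$ range over $\Fix T$ in the telescoped sequence argument.
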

\begin{proof}
Set $S :=-\nu\Id +\lambda T_1 -\delta T_3T_1$, $\omega_1 :=\frac{\eta^*}{\eta}-1$, $\omega_2 :=\frac{\delta}{2\eta\nu\sigma_3}$, and 
\begin{equation}
\omega_3 :=\begin{cases}
0 &\text{if~} \lambda =2\nu\sigma_1 =2\sigma_2,\\
\frac{\eta}{2\nu(\nu\sigma_1+\sigma_2-\lambda)} &\text{if~} \lambda <\nu\sigma_1+\sigma_2. 
\end{cases}
\end{equation}
Then $\omega_1 >0$, $\omega_2 >0$, and $\omega_3 \geq 0$. 
We derive from Proposition~\ref{p:abs} that, for all $x,y\in \dom T$, 
\begin{align}\label{e:key}
\|Tx-Ty\|^2&\leq \|x-y\|^2 -\omega_1\|(\Id-T)x-(\Id-T)y\|^2 \notag\\
&\quad -\omega_2\|(\Id-T)x-(\Id-T)y-2\eta\sigma_3(T_3T_1x-T_3T_1y)\|^2 \notag\\
&\quad -\omega_3\|(2\nu\sigma_1-\lambda)(T_1x-T_1y) +(2\sigma_2-\lambda)(T_2Sx-T_2Sy)\|^2
\end{align}
and $T$ is conically $\frac{\eta}{\eta^*}$-averaged.

\ref{t:abscvg_T}: Since $\eta <\eta^*$, $T$ is $\frac{\eta}{\eta^*}$-averaged. By \cite[Corollary~2.10]{BDP19}, $(x_n)_{n\in \mathbb{N}}$ converges weakly to a point $x^*\in \Fix T$ and the rate of asymptotic regularity of $T$ is $o(1/\sqrt{n})$.  

\ref{t:abscvg_T3T1}: Let $y\in\Fix T$. It follows from \eqref{e:key} that, for all $n \in \mathbb{N}$,
\begin{align}
\|x_{n+1}-y\|^2&\leq \|x_n-y\|^2 -\omega_1\|(\Id-T)x_n\|^2 \notag\\
&\quad -\omega_2\|(\Id-T)x_n-2\eta\sigma_3(T_3T_1x_n-T_3T_1y)\|^2 \notag\\
&\quad -\omega_3\|(2\nu\sigma_1-\lambda)(T_1x_n-T_1y) +(2\sigma_2-\lambda)(T_2Sx_n-T_2Sy)\|^2.
\end{align}
Telescoping this inequality yields
\begin{align}\label{e:tabscvg1}
&\omega_1\sum_{n=0}^{\infty} \|(\Id-T)x_n\|^2 +\omega_2\sum_{n=0}^{\infty} \|(\Id-T)x_n-2\eta\sigma_3(T_3T_1x_n-T_3T_1y)\|^2 \notag\\
&+\omega_3\sum_{n=0}^{\infty} \|(2\nu\sigma_1-\lambda)(T_1x_n-T_1y) +(2\sigma_2-\lambda)(T_2Sx_n-T_2Sy)\|^2
\leq \|x_0-y\|^2 <+\infty.
\end{align}
Since $\omega_1,\omega_2 >0$ and $\omega_3 \geq 0$, we deduce that, as $n\to +\infty$,
\begin{equation}
(\Id-T)x_n\to 0 \text{~~and~~} (\Id-T)x_n-2\eta\sigma_3(T_3T_1x_n-T_3T_1y)\to 0,
\end{equation}
which imply that
\begin{equation}
T_3T_1x_n\to T_3T_1y.
\end{equation}
As $y$ is arbitrary in $\Fix T$ and $x^*\in \Fix T$, we must have $T_3T_1y =T_3T_1x^*$, and so $T_3T_1(\Fix T) =\{T_3T_1x^*\}$.

\ref{t:abscvg_T1a}: We will apply demiclosedness principle in Proposition~\ref{p:demi} to prove that $(T_1x_n)_{n\in \mathbb{N}}$ converges weakly to $T_1x^*$. First, recall from \ref{t:abscvg_T} that 
\begin{equation}\label{e:wkxk}
x_n\rightharpoonup x^*\in\Fix T.
\end{equation}
As a result, $(x_n)_{n\in \mathbb{N}}$ is bounded, and so is $(T_1x_n)_{n\in \mathbb{N}}$. Let $y^*$ be a weak cluster point of $(T_1x_n)_{n\in \mathbb{N}}$. Then there exists a subsequence $(x_{k_n})_{n\in \mathbb{N}}$ such that
\begin{equation}\label{e:wkT1}
T_1x_{k_n}\rightharpoonup y^*.
\end{equation} 
Define $z_n :=Sx_n =(1-\lambda)x_n +\lambda T_1x_n -\delta T_3T_1x_n$. Since $T_3T_1x_n\to T_3T_1x^*$ by \ref{t:abscvg_T3T1}, it follows that
\begin{equation}\label{e:wkzk}
z_{k_n}\rightharpoonup (1-\lambda) x^* +\lambda y^* -\delta T_3T_1x^* =:z^*.
\end{equation}
Next, we have from \ref{t:abscvg_T} that
\begin{equation}\label{e:wkT1-T2}
T_1x_{k_n} -T_2z_{k_n} =(T_1-T_2S)x_{k_n} =\frac{1}{\eta}(\Id-T)x_{k_n}\to 0,
\end{equation}
which, due to \eqref{e:wkT1}, implies that
\begin{equation}\label{e:wkT2}
T_2z_{k_n}\rightharpoonup y^*.
\end{equation}
Set $\rho_1 :=\lambda-1 =\nu >0$ and $\rho_2 :=1$. Then
\begin{equation}\label{e:wkrho}
\frac{\rho_1\sigma_1+\rho_2\sigma_2}{\rho_1+\rho_2}
=\frac{(\lambda-1)\frac{\lambda}{2\nu}+1\cdot\frac{\lambda}{2}}{\lambda}=1
\end{equation}
and it follows from \eqref{e:wkzk} that
\begin{equation}
\rho_1(x^*-y^*)+\rho_2(z^*-y^*)
=-\delta T_3T_1x^*.
\end{equation}
Using the definition of $z_n$ and then \eqref{e:wkT1-T2}, we obtain
\begin{align}\label{e:wksum}
&\rho_1(x_{k_n}-T_1x_{k_n}) +\rho_2(z_{k_n}-T_2z_{k_n})\notag\\
&=(\lambda-1)(x_{k_n}-T_1x_{k_n}) +(1-\lambda)x_{k_n} +\lambda T_1x_{k_n} -\delta T_3T_1x_{k_n} -T_2z_{k_n}\notag\\
&=T_1x_{k_n} -T_2z_{k_n} -\delta T_3T_1x_{k_n}\notag\\
&\to -\delta T_3T_1x^*=\rho_1(x^*-y^*) + \rho_2(z^*-T_2z^*).
\end{align}
Now, in view of \eqref{e:wkxk}, \eqref{e:wkT1}, \eqref{e:wkzk}, \eqref{e:wkT2}, \eqref{e:wkrho}, and \eqref{e:wksum}, we are ready to apply Proposition~\ref{p:demi} to derive that
\begin{equation}
y^* =T_1x^* =T_2z^*,
\end{equation}
which is the unique weak cluster point $(T_1x_n)_{n\in \mathbb{N}}$. Thus, $T_1x_n\rightharpoonup T_1x^*$. Since $T_1x_n-T_2Sx_n =\frac{1}{\eta}(\Id-T)x_n\to 0$ and $x^*\in \Fix T$, we derive that $T_2Sx_n\rightharpoonup T_1x^* =T_2Sx^*$.

\ref{t:abscvg_T1b}: In this case, $\omega_3>0$. So \eqref{e:tabscvg1} implies that, as $n\to +\infty$,
\begin{equation}\label{e:T1vsT2S}
(2\nu\sigma_1-\lambda)(T_1x_n-T_1y) +(2\sigma_2-\lambda)(T_2Sx_n-T_2Sy)\to 0.
\end{equation}
On the other hand,
\begin{equation}
(T_1x_n-T_1y) -(T_2Sx_n-T_2Sy) =\frac{1}{\eta}(\Id-T)x_n -\frac{1}{\eta}(\Id-T)y =\frac{1}{\eta}(\Id-T)x_n\to 0,
\end{equation}
which together with \eqref{e:T1vsT2S} yields
\begin{equation}
T_1x_n\to T_1y \text{~~and~~} T_2Sx_n\to T_2Sy. 
\end{equation}
Since $y$ is arbitrary in $\Fix T$ and $x^*\in \Fix T$, it also follows that $T_1y =T_1x^*$ and $T_2Sy =T_2Sx^* =T_1x^*$. Hence, $T_1(\Fix T) =T_2S(\Fix T) =\{T_1x^*\}$. The proof is complete.
\end{proof}

\section{Zeros of the sum of three operators}
\label{s:3opers}

In this section, we apply the result to the problem of finding a zero of the sum of three operators. We assume that the operator $A$ is maximally $\alpha$-monotone, the operator $B$ is maximally $\beta$-monotone, and the operator $C$ is $\sigma$-cocoercive. We will consider two cases: $\alpha+\beta=0$ and $\alpha+\beta>0$.

\begin{theorem}[convergence in the case $\alpha+\beta=0$]
\label{t:neutral}
Suppose that $A$ and $B$ are respectively maximally $\alpha$- and $\beta$-monotone with $\alpha+\beta =0$, and $C$ is $\sigma$-cocoercive. Let $\eta\in \RPP$ and let $\gamma\in \RPP$ be such that
\begin{equation}\label{e:tneutral-1}
1+2\gamma\alpha >0
\text{~~and~~} \eta^*
:=2+2\gamma\alpha-\frac{\gamma}{2\sigma} >0.
\end{equation}
Set $\delta =\frac{\gamma}{1+2\gamma\alpha}$, $\lambda =1+\frac{\delta}{\gamma}$, and let $(x_n)_{n\in\NN}$ be a sequence generated by $T_{A,B,C}$ in \eqref{e:TABC}. 
Then the following hold:
\begin{enumerate}
\item\label{t:neutral_dom} 
$T_{A,B,C}$ is single-valued and has full domain.
\item\label{t:neutral_avg} 
For all $x,y\in X$, 
\begin{align}
\|T_{A,B,C} x-T_{A,B,C} y\|^2 &\leq \|x-y\|^2 -\left(\frac{\eta^*}{\eta}-1\right)\|(\Id-T_{A,B,C})x-(\Id-T_{A,B,C})y\|^2 \notag\\
&\ -\frac{\gamma}{2\eta\sigma}\left\|(\Id-T_{A,B,C})x -(\Id-T_{A,B,C})y-2\eta\sigma(CJ_{\gamma A}x-CJ_{\gamma A}y)\right\|^2.
\end{align}
In particular, $T_{A,B,C}$ is conically $\frac{\eta}{\eta^*}$-averaged.
\item\label{t:neutral_cvg} 
If $\zer(A+B+C)\neq\varnothing$ and $\eta <\eta^*$, then the rate of asymptotic regularity of $T_{A,B,C}$ is $o(1/\sqrt{n})$ and $(x_n)_{n\in \mathbb{N}}$ converges weakly to a point $x^*\in \Fix T$, while $(J_{\gamma A}x_n)_{n\in \mathbb{N}}$ and $(J_{\delta B}Sx_n)_{n\in \mathbb{N}}$ converge weakly to $J_{\gamma A}x^* =J_{\delta B}Sx^*\in \zer(A+B+C)$ where $S :=(1-\lambda)\Id +\lambda J_{\gamma A} -\delta CJ_{\gamma A}$, $(CJ_{\gamma A}x_n)_{n\in \mathbb{N}}$ converges strongly to $CJ_{\gamma A}x^*$, and $C(\zer(A+B+C)) =\{CJ_{\gamma A}x^*\}$.
\end{enumerate}
\end{theorem}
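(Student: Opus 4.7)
The plan is to cast $T_{A,B,C}$ as an instance of the abstract operator from Theorem \ref{t:abscvg} by setting $T_1 := J_{\gamma A}$, $T_2 := J_{\delta B}$, $T_3 := C$, and $\nu := \delta/\gamma$, so that $\nu = \lambda - 1$ by construction. With this identification, the defining formula \eqref{e:TABC} matches \eqref{e:T} on the nose, and the operator $S$ appearing in the statement is exactly the operator $S := -\nu\Id + \lambda T_1 - \delta T_3 T_1$ of Theorem \ref{t:abscvg}.

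First I would verify the cocoercivity constants of $T_1, T_2, T_3$. The assumption $1 + 2\gamma\alpha > 0$ forces $1 + \gamma\alpha > 0$ (immediate if $\alpha \geq 0$; otherwise $1 + \gamma\alpha \geq (1+2\gamma\alpha)/2 > 0$). A short calculation using $\delta = \gamma/(1+2\gamma\alpha)$ and $\beta = -\alpha$ gives $1 + \delta\beta = (1+\gamma\alpha)/(1+2\gamma\alpha) > 0$. Proposition \ref{p:resol-mono} then yields that $J_{\gamma A}$ and $J_{\delta B}$ are single-valued with full domain and cocoercive with constants $\sigma_1 := 1+\gamma\alpha$ and $\sigma_2 := (1+\gamma\alpha)/(1+2\gamma\alpha)$ respectively. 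Combined with the $\sigma$-cocoercivity of $C$, this already gives \ref{t:neutral_dom}.

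Next, substituting $\nu = 1/(1+2\gamma\alpha)$ and $\lambda = (2+2\gamma\alpha)/(1+2\gamma\alpha)$ yields the key identities $2\nu\sigma_1 = 2\sigma_2 = \lambda$, placing us squarely in case \ref{p:abs_lambda=} of Proposition \ref{p:abs}. One then checks algebraically that $\frac{1}{\nu}\bigl(\lambda - \delta/(2\sigma)\bigr) = 2 + 2\gamma\alpha - \gamma/(2\sigma)$, which is exactly the $\eta^*$ of \eqref{e:tneutral-1}. Invoking Proposition \ref{p:abs}\ref{p:abs_lambda=} then delivers the inequality displayed in \ref{t:neutral_avg} and the conical $\frac{\eta}{\eta^*}$-averagedness of $T_{A,B,C}$.

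Finally, for \ref{t:neutral_cvg}, the hypothesis $\zer(A+B+C)\neq\varnothing$ combined with Proposition \ref{p:fix} gives $\Fix T_{A,B,C} \neq \varnothing$ and $J_{\gamma A}(\Fix T_{A,B,C}) = \zer(A+B+C)$. Since $\eta < \eta^*$ and $\nu = \lambda - 1$, Theorem \ref{t:abscvg}\ref{t:abscvg_T},\ref{t:abscvg_T3T1},\ref{t:abscvg_T1a} apply and produce: weak convergence $x_n \rightharpoonup x^* \in \Fix T_{A,B,C}$ with rate of asymptotic regularity $o(1/\sqrt{n})$; weak convergence of $J_{\gamma A}x_n$ and $J_{\delta B}Sx_n$ to the common limit $J_{\gamma A}x^* = J_{\delta B}Sx^*$, which lies in $\zer(A+B+C)$ by Proposition \ref{p:fix}; and strong convergence $CJ_{\gamma A}x_n \to CJ_{\gamma A}x^*$ with $C(\zer(A+B+C)) = CJ_{\gamma A}(\Fix T_{A,B,C}) = \{CJ_{\gamma A}x^*\}$. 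The main obstacle is nothing more than bookkeeping: carefully verifying $1+\delta\beta > 0$ and the identity $2\nu\sigma_1 = 2\sigma_2 = \lambda$, after which the abstract machinery of Section \ref{s:abstrt} does all the work.
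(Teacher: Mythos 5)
Your proposal is correct and follows essentially the same route as the paper: identify $T_1=J_{\gamma A}$, $T_2=J_{\delta B}$, $T_3=C$, $\nu=\lambda-1$, verify $1+\gamma\alpha>0$ and $1+\delta\beta=(1+\gamma\alpha)/(1+2\gamma\alpha)>0$ to get the cocoercivity constants from Proposition~\ref{p:resol-mono}, check $2\nu\sigma_1=2\sigma_2=\lambda$ and $\frac{1}{\nu}\bigl(\lambda-\frac{\delta}{2\sigma}\bigr)=\eta^*$ to land in case~\ref{p:abs_lambda=} of Proposition~\ref{p:abs}, and then conclude via Proposition~\ref{p:fix} and Theorem~\ref{t:abscvg}\ref{t:abscvg_T}--\ref{t:abscvg_T1a}. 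All the algebraic identities you cite check out, so no gaps.
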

\begin{proof}
First, we can check that there always exist $\gamma\in \RPP$ such that \eqref{e:tneutral-1} holds (indeed, by choosing $\gamma >0$ satisfying $1/\gamma >\max\{-2\alpha, -\alpha+1/(4\sigma)\}$). Next, we have that $1+\gamma\alpha =1/2 +(1+2\gamma\alpha)/2 >0$. Since $\alpha+\beta =0$, we also have
\begin{equation}\label{e:1+deltabeta}
1+\delta\beta =1-\delta\alpha =1-\frac{\gamma\alpha}{1+2\gamma\alpha} =\frac{1+\gamma\alpha}{1+2\gamma\alpha} >0.
\end{equation}
By Proposition~\ref{p:resol-mono}, $J_{\gamma A}$, $J_{\delta B}$, and hence $T_{A,B,C}$ are single-valued and have full domain. This proves \ref{t:neutral_dom}. 

Next, Proposition~\ref{p:resol-mono} also implies that $J_{\gamma A}$ and $J_{\delta B}$ are respectively $(1+\gamma\alpha)$- and $(1+\delta\beta)$-cocoercive. Set $\sigma_1 :=1+\gamma\alpha >0$, $\sigma_2 :=1+\delta\beta >0$, $\sigma_3 :=\sigma >0$, and $\nu :=\lambda-1 >0$. Then $2\nu\sigma_1 =2(\lambda-1)(1+\gamma\alpha) =2(1+\gamma\alpha)/(1+2\gamma\alpha) =\lambda$ and, by \eqref{e:1+deltabeta}, $2\sigma_2 =2(1+\delta\beta) =2(1+\gamma\alpha)/(1+2\gamma\alpha) =\lambda$. Also, 
\begin{equation}
\frac{1}{\nu}\left(\lambda-\frac{\delta}{2\sigma_3}\right) =\frac{1}{\lambda-1}\left(\lambda-\frac{(\lambda-1)\gamma}{2\sigma}\right) =1+\frac{\gamma}{\delta}-\frac{\gamma}{2\sigma} =\eta^* >0.
\end{equation}  
Applying Proposition~\ref{p:abs}\ref{p:abs_lambda=}, we get \ref{t:neutral_avg}.

Now, by Proposition~\ref{p:fix}, $J_{\gamma A}(\Fix T_{A,B,C}) =\zer(A+B+C)$. We then apply Theorem~\ref{t:abscvg}\ref{t:abscvg_T}--\ref{t:abscvg_T1a} to complete the proof.
\end{proof}

Using Theorem~\ref{t:neutral}, we recover the results in \cite[Theorem~2.1(1)]{DY17}, which partly spurred our interest in the topic.

\begin{corollary}
\label{c:2mono}
Suppose that $A$ and $B$ are respectively maximally monotone, that $C$ is $\sigma$-cocoercive. Let $\gamma\in \left]0, 4\sigma\right[$, $\eta\in \left]0, 2-\frac{\gamma}{2\sigma}\right[$, and define
\begin{equation}
T_{A,B,C} :=\Id -\eta J_{\gamma A} +\eta J_{\gamma B}(2J_{\gamma A} -\Id -\gamma CJ_{\gamma A}).
\end{equation}
Then the following hold:
\begin{enumerate}
\item\label{c:2mono_avg} 
$T_{A,B,C}$ is $\frac{2\eta\sigma}{4\sigma-\gamma}$-averaged.
\item\label{c:2mono_cvg} 
If $\zer(A+B+C)\neq\varnothing$, then the rate of asymptotic regularity of $T_{A,B,C}$ is $o(1/\sqrt{n})$ and $(x_n)_{n\in \mathbb{N}}$ converges weakly to a point $x^*\in \Fix T$, while $(J_{\gamma A}x_n)_{n\in \mathbb{N}}$ and $(J_{\gamma B}(2J_{\gamma A} -\Id -\gamma CJ_{\gamma A})x_n)_{n\in \mathbb{N}}$ converge weakly to $J_{\gamma A}x^* =J_{\gamma B}(2J_{\gamma A} -\Id -\gamma CJ_{\gamma A})x^*\in \zer(A+B+C)$, $(CJ_{\gamma A}x_n)_{n\in \mathbb{N}}$ converges strongly to $CJ_{\gamma A}x^*$, and $C(\zer(A+B+C)) =\{CJ_{\gamma A}x^*\}$.
\end{enumerate}
\end{corollary}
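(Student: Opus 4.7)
The plan is to obtain this corollary as a direct specialization of Theorem~\ref{t:neutral} with $\alpha=\beta=0$. First, I will check that the hypotheses of Theorem~\ref{t:neutral} are satisfied: with $\alpha=\beta=0$, the condition $\alpha+\beta=0$ holds trivially, the requirement $1+2\gamma\alpha>0$ reduces to $1>0$, and the key quantity becomes
\begin{equation*}
\eta^* = 2+2\gamma\alpha-\frac{\gamma}{2\sigma} = 2-\frac{\gamma}{2\sigma},
\end{equation*}
which is positive precisely when $\gamma<4\sigma$, matching the hypothesis $\gamma\in\left]0,4\sigma\right[$. Similarly, the hypothesis $\eta\in\left]0, 2-\frac{\gamma}{2\sigma}\right[$ is exactly $\eta<\eta^*$.

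Next, I will compute the derived parameters: $\delta=\frac{\gamma}{1+2\gamma\alpha}=\gamma$ and $\lambda=1+\frac{\delta}{\gamma}=2$. Substituting these into the definition \eqref{e:TABC} of $T_{A,B,C}$ gives
\begin{equation*}
T_{A,B,C} = \Id-\eta J_{\gamma A}+\eta J_{\gamma B}\bigl((1-2)\Id+2J_{\gamma A}-\gamma CJ_{\gamma A}\bigr) = \Id-\eta J_{\gamma A}+\eta J_{\gamma B}(2J_{\gamma A}-\Id-\gamma CJ_{\gamma A}),
\end{equation*}
which matches the operator stated in the corollary. In particular, the operator $S$ from Theorem~\ref{t:neutral}\ref{t:neutral_cvg} coincides with $2J_{\gamma A}-\Id-\gamma CJ_{\gamma A}$.

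For part~\ref{c:2mono_avg}, Theorem~\ref{t:neutral}\ref{t:neutral_avg} yields conical $\frac{\eta}{\eta^*}$-averagedness. Since $\eta<\eta^*$, one has $\frac{\eta}{\eta^*}\in\left]0,1\right[$, so conical averagedness upgrades to ordinary averagedness (see the remark following Proposition~\ref{p:coavg}), and the constant simplifies to
\begin{equation*}
\frac{\eta}{\eta^*} = \frac{\eta}{2-\frac{\gamma}{2\sigma}} = \frac{2\eta\sigma}{4\sigma-\gamma},
\end{equation*}
as required. For part~\ref{c:2mono_cvg}, I will invoke Theorem~\ref{t:neutral}\ref{t:neutral_cvg} directly: it delivers the $o(1/\sqrt n)$ asymptotic regularity rate, the weak convergence $x_n\rightharpoonup x^*\in\Fix T_{A,B,C}$, the weak convergence of the two shadow sequences $(J_{\gamma A}x_n)$ and $(J_{\gamma B}Sx_n)$ to a common point in $\zer(A+B+C)$, the strong convergence $CJ_{\gamma A}x_n\to CJ_{\gamma A}x^*$, and the single-valuedness $C(\zer(A+B+C))=\{CJ_{\gamma A}x^*\}$. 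There is no real obstacle here; the proof is essentially an unpacking of parameters, and the only minor verifications are the arithmetic identities $\delta=\gamma$, $\lambda=2$, and the simplification of $\eta/\eta^*$.
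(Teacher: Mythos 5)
Your proof is correct and follows exactly the paper's own route: the paper likewise proves this corollary by applying Theorem~\ref{t:neutral} with $\alpha=\beta=0$, $\delta=\gamma$, and $\eta^*=2-\frac{\gamma}{2\sigma}$. Your additional verifications ($\lambda=2$, the form of $S$, and the simplification $\frac{\eta}{\eta^*}=\frac{2\eta\sigma}{4\sigma-\gamma}$) are just the arithmetic the paper leaves implicit.
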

\begin{proof}
Apply Theorem~\ref{t:neutral} with $\alpha =\beta =0$, $\delta =\gamma$, and $\eta^* =2-\frac{\gamma}{2\sigma}$.
\end{proof}

\begin{remark}[range of parameter $\gamma$]
We note that while Corollary~\ref{c:2mono}\ref{c:2mono_avg} is straightforward from \cite[Proposition~2.1]{DY17}, Corollary~\ref{c:2mono}\ref{c:2mono_cvg} improves upon \cite[Theorem~2.1(1)]{DY17} by only requiring the parameter $\gamma\in \left]0, 4\sigma\right[$ instead of $\gamma\in \left]0, 2\sigma\varepsilon\right[$ with $\varepsilon\in \left]0, 1\right[$.
\end{remark}

Next, we consider the case $\alpha+\beta>0$. This case indeed allows for some flexibility in choosing the resolvent parameters $\gamma,\delta$. In particular, let us recall the case $\alpha+\beta =0$ in Theorem~\ref{t:neutral}, the resolvent parameters $\gamma,\delta$ must be directly related by
\begin{equation}
\delta=\frac{\gamma}{1+2\gamma\alpha},
\quad \text{or equivalently,}\quad 
\frac{1}{\delta}=\frac{1}{\gamma}+2\alpha.
\end{equation}
In the case $\alpha+\beta>0$, the above exact relation is no longer necessary; instead, for a given $\gamma$, one can choose $\delta$ within a range such that
\begin{equation}
\max\left\{0,\frac{1}{\gamma}+2\alpha-2\sqrt{\Delta}\right\} <\frac{1}{\delta} <\frac{1}{\gamma}+2\alpha+2\sqrt{\Delta}
\end{equation}
for some positive $\Delta$ that depends on $\alpha,\beta$ and $\gamma$ itself.
In the next results, we will show that such choices for $(\gamma,\delta)$ always exist, and will guarantee convergence of the algorithm.

\begin{lemma}[existence of resolvent parameters]
\label{l:exist}
Let $\alpha,\beta\in \RR$ be such that $\alpha+\beta>0$, let $\sigma\in \RPP$, and let $\gamma,\delta\in \RPP$. Set 
\begin{equation}
\gamma_0 :=\begin{cases}
0 &\text{if~} \alpha \geq \frac{1}{4\sigma},\\
-\alpha+\frac{1}{4\sigma} &\text{if~} -\frac{1}{4\sigma} \leq \alpha < \frac{1}{4\sigma},\\
2\beta -2\sqrt{(\alpha+\beta)(\beta-\frac{1}{4\sigma})} &\text{if~} \alpha < -\frac{1}{4\sigma}.
\end{cases}
\end{equation}
Then $\gamma_0\geq \max\{0, -\alpha+\frac{1}{4\sigma}\}$ and the following statements are equivalent:
\begin{enumerate}[label ={\rm (\roman*)}]
\item\label{l:exist_ori} 
$\frac{4\gamma\delta(1+\gamma\alpha)(1+\delta\beta)-(\gamma+\delta)^2}{2\gamma\delta^2(\alpha+\beta)}-\frac{\gamma}{2\sigma} >0$.

\item\label{l:exist_sol}
$\frac{1}{\gamma} >\gamma_0$ and $\max\{0, \frac{1}{\gamma}+2\alpha-2\sqrt{\Delta}\} <\frac{1}{\delta} <\frac{1}{\gamma}+2\alpha+2\sqrt{\Delta}$, where $\Delta :=(\alpha+\beta)(\frac{1}{\gamma}+\alpha-\frac{1}{4\sigma})$.
\end{enumerate}
Consequently, there always exist $\gamma,\delta\in \RPP$ that satisfy both \ref{l:exist_ori} and \ref{l:exist_sol}.
\end{lemma}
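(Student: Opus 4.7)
The plan is to recast condition~\ref{l:exist_ori} as a quadratic inequality in $v:=1/\delta$ with $u:=1/\gamma$ held fixed. Clearing denominators in~\ref{l:exist_ori} by the positive quantity $2\gamma\delta^2\sigma(\alpha+\beta)$ and rewriting in the variables $u,v$ reduces it to
\[
4(u+\alpha)(v+\beta)-(u+v)^2>\frac{\alpha+\beta}{\sigma}.
\]
Completing the square in $v$ then yields the equivalent form
\[
\bigl(v-(u+2\alpha)\bigr)^2<4\Delta,\qquad \Delta=(\alpha+\beta)\Bigl(u+\alpha-\tfrac{1}{4\sigma}\Bigr),
\]
which forces $\Delta>0$ together with $\bigl|v-(u+2\alpha)\bigr|<2\sqrt{\Delta}$. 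Intersecting this interval with the half-line $v>0$ produces exactly the range for $1/\delta$ appearing in~\ref{l:exist_sol}, and this intersection is nonempty if and only if $u+2\alpha+2\sqrt{\Delta}>0$.

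I would then identify, in each regime of $\alpha$, the lower bound on $u$ under which both $\Delta>0$ and $u+2\alpha+2\sqrt{\Delta}>0$ hold. If $\alpha\geq\tfrac{1}{4\sigma}$, both hold for every $u>0$, giving $\gamma_0=0$. If $-\tfrac{1}{4\sigma}\leq\alpha<\tfrac{1}{4\sigma}$, the binding constraint is $\Delta>0$, i.e., $u>-\alpha+\tfrac{1}{4\sigma}=\gamma_0$; and for such $u$ one has $u+2\alpha>\alpha+\tfrac{1}{4\sigma}\geq 0$, so the second condition is automatic. If $\alpha<-\tfrac{1}{4\sigma}$, then $\alpha+\beta>0$ forces $\beta>-\alpha>\tfrac{1}{4\sigma}>0$; squaring $u+2\alpha+2\sqrt{\Delta}>0$ in the regime $u+2\alpha\leq 0$ leads to
\[
u^2-4\beta u-4\alpha\beta+\frac{\alpha+\beta}{\sigma}<0,
\]
whose roots are $2\beta\pm 2\sqrt{(\alpha+\beta)(\beta-\tfrac{1}{4\sigma})}$, so the sharp threshold is $u>\gamma_0$.

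To settle the compatibility claim $\gamma_0\geq\max\{0,-\alpha+\tfrac{1}{4\sigma}\}$, only the third case requires computation: a direct expansion gives
\[
\Bigl(2\beta+\alpha-\tfrac{1}{4\sigma}\Bigr)^2-4(\alpha+\beta)\Bigl(\beta-\tfrac{1}{4\sigma}\Bigr)=\Bigl(\alpha+\tfrac{1}{4\sigma}\Bigr)^2\geq 0,
\]
and $2\beta+\alpha-\tfrac{1}{4\sigma}=(\alpha+\beta)+(\beta-\tfrac{1}{4\sigma})>0$, so $\gamma_0\geq-\alpha+\tfrac{1}{4\sigma}$. Thus in every case $u>\gamma_0$ encodes both $\Delta>0$ and the nonemptiness of the $v$-interval, establishing \ref{l:exist_ori}$\Leftrightarrow$\ref{l:exist_sol}. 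For the final existence claim, pick any $u>\gamma_0$ and any $v$ in the nonempty interval described by~\ref{l:exist_sol}; then $(\gamma,\delta):=(1/u,1/v)$ satisfies both conditions.

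The main obstacle is the third case: one must recognize that $u+2\alpha+2\sqrt{\Delta}>0$ is strictly stronger than $\Delta>0$ (because $u+2\alpha$ can be negative), derive the governing quadratic in $u$ whose smaller root is $\gamma_0$, and then verify via the perfect-square identity above that this $\gamma_0$ in fact dominates $-\alpha+\tfrac{1}{4\sigma}$, so that the case-wise definition of $\gamma_0$ glues into a single uniform threshold.
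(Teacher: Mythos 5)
Your proposal is correct and follows essentially the same route as the paper: recast condition (i) as a quadratic inequality in $1/\delta$ with discriminant $4\Delta$, then show case by case in $\alpha$ that the resulting interval meets $(0,\infty)$ exactly when $1/\gamma>\gamma_0$, the only substantive work being the regime $\alpha<-\frac{1}{4\sigma}$ (where the paper manipulates nested square roots instead of your quadratic in $u$, and verifies $\gamma_0\geq-\alpha+\frac{1}{4\sigma}$ by completing a square rather than via your discriminant identity). The one detail to add when writing it up is that in that regime the inequality $u^2-4\beta u-4\alpha\beta+\frac{\alpha+\beta}{\sigma}<0$ confines $u$ to a bounded interval, so you should also note that its upper root $2\beta+2\sqrt{(\alpha+\beta)(\beta-\frac{1}{4\sigma})}$ exceeds $-2\alpha$ (which holds since $\beta>-\alpha$), so that the piece $u>-2\alpha$ where the condition is automatic glues with the piece $\gamma_0<u\leq-2\alpha$ to give the single threshold $u>\gamma_0$.
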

\begin{proof}
If $\alpha \geq -\frac{1}{4\sigma}$, then $\gamma_0 =\max\{0, -\alpha+\frac{1}{4\sigma}\}$ by definition. If $\alpha < -\frac{1}{4\sigma} <0$, then $\beta-\frac{1}{4\sigma} >\beta+\alpha >0$ and 
\begin{subequations}
\begin{align}
\gamma_0 &=2\beta -2\sqrt{(\alpha+\beta)(\beta-\tfrac{1}{4\sigma})} =\left(\sqrt{\beta-\tfrac{1}{4\sigma}} -\sqrt{\alpha+\beta}\right)^2 -\alpha +\tfrac{1}{4\sigma}\\ 
&\geq -\alpha +\tfrac{1}{4\sigma} =\max\{0, -\alpha +\tfrac{1}{4\sigma}\}.
\end{align}
\end{subequations}

Next, we have that
\begin{subequations}\label{e:equiv}
\begin{align}
& \frac{4\gamma\delta(1+\gamma\alpha)(1+\delta\beta)-(\gamma+\delta)^2}{2\gamma\delta^2(\alpha+\beta)}-\frac{\gamma}{2\sigma} >0\label{e:equiv-a}\\
\iff\ & 
(\gamma+\delta)^2 <4\gamma\delta(1+\gamma\alpha)(1+\delta\beta) -\frac{\gamma^2\delta^2(\alpha+\beta)}{\sigma}\\
\iff\ &
\left(1-4\gamma\beta-4\gamma^2\alpha\beta+\frac{\gamma^2(\alpha+\beta)}{\sigma}\right)\delta^2 -2\gamma(1+2\gamma\alpha)\delta +\gamma^2 <0\\
\iff\ &
\left(\frac{1}{\gamma^2}-4\beta\frac{1}{\gamma}-4\alpha\beta+\frac{\alpha+\beta}{\sigma}\right) -2\left(\frac{1}{\gamma}+2\alpha\right)\frac{1}{\delta} +\frac{1}{\delta^2} <0\\
\iff\ &
\Delta = (\alpha+\beta)(\frac{1}{\gamma}  +\alpha-\frac{1}{4\sigma})>0 \text{~and~} \frac{1}{\gamma}+2\alpha-2\sqrt{\Delta} <\frac{1}{\delta} <\frac{1}{\gamma}+2\alpha+2\sqrt{\Delta}.\\
\iff\ &
\frac{1}{\gamma} > -\alpha+\frac{1}{4\sigma} \text{~and~} \frac{1}{\gamma}+2\alpha-2\sqrt{\Delta} <\frac{1}{\delta} <\frac{1}{\gamma}+2\alpha+2\sqrt{\Delta}. \label{e:sol} 
\end{align}
\end{subequations}

Suppose \ref{l:exist_sol} holds, then $\frac{1}{\gamma}>\gamma_0\geq \max\{0, -\alpha+\frac{1}{4\sigma}\}$. So \eqref{e:sol} holds. It follows that \eqref{e:equiv-a} holds, which is  \ref{l:exist_ori}.

Now, suppose that \ref{l:exist_ori} holds. Then \eqref{e:sol} holds, and so $\frac{1}{\gamma} >\max\{0, -\alpha+\frac{1}{4\sigma}\}$ and $\frac{1}{\gamma}+2\alpha+2\sqrt{\Delta} >0$. To obtain \ref{l:exist_sol}, it suffices to show that $\frac{1}{\gamma} >\gamma_0$. If $\alpha \geq -\frac{1}{4\sigma}$, then $\gamma_0 =\max\{0, -\alpha+\frac{1}{4\sigma}\}$ and we readily have $\frac{1}{\gamma} >\gamma_0$. Let us consider the case when $\alpha < -\frac{1}{4\sigma}$. Then $\beta-\frac{1}{4\sigma} >\beta+\alpha >0$ and 
\begin{subequations}
\begin{align}
\frac{1}{\gamma}+2\alpha+2\sqrt{\Delta} >0 
&\iff \left(\sqrt{\alpha+\beta} +\sqrt{\tfrac{1}{\gamma}+\alpha-\tfrac{1}{4\sigma}}\right)^2 >\beta-\tfrac{1}{4\sigma} \\
&\iff \sqrt{\tfrac{1}{\gamma}+\alpha-\tfrac{1}{4\sigma}} >\sqrt{\beta-\tfrac{1}{4\sigma}} -\sqrt{\alpha+\beta} \\
&\iff \tfrac{1}{\gamma}+\alpha-\tfrac{1}{4\sigma} >\left(\sqrt{\beta-\tfrac{1}{4\sigma}} -\sqrt{\alpha+\beta}\right)^2 \\
&\iff \tfrac{1}{\gamma} > 2\beta -2\sqrt{(\alpha+\beta)(\beta-\tfrac{1}{4\sigma})}=\gamma_0,
\end{align}
\end{subequations}  
which finish our claim.

To see the existence of $\gamma$ and $\delta$, we choose $\gamma >0$ such that $\frac{1}{\gamma} >\gamma_0$ and then choose $\delta >0$ that satisfies the second condition in \ref{l:exist_sol}.
\end{proof}

We are now ready to prove the convergence of the algorithm for the case $\alpha+\beta>0$.

\begin{theorem}[convergence in the case $\alpha+\beta>0$]
\label{t:strong}
Suppose that $A$ and $B$ are respectively maximally $\alpha$- and $\beta$-monotone with $\alpha+\beta >0$, that $C$ is $\sigma$-cocoercive, and that $\gamma,\delta\in \RPP$ satisfy  
\begin{equation}\label{e:para}
\eta^* :=\frac{4\gamma\delta(1+\gamma\alpha)(1+\delta\beta)-(\gamma+\delta)^2}{2\gamma\delta^2(\alpha+\beta)}-\frac{\gamma}{2\sigma} >0. 
\end{equation}
Set $\lambda =1+\frac{\delta}{\gamma}$ and let $\eta\in \RPP$.
Let $(x_n)_{n\in\NN}$ be a sequence generated by $T_{A,B,C}$ in \eqref{e:TABC} and set $S :=(1-\lambda)\Id +\lambda J_{\gamma A} -\delta CJ_{\gamma A}$. 
Then the following hold:
\begin{enumerate}
\item\label{t:strong_dom} 
$T_{A,B,C}$ is single-valued and has full domain.
\item\label{t:strong_avg} 
For all $x,y\in X$, 
\begin{align}
\|T_{A,B,C} x&-T_{A,B,C} y\|^2 \leq \|x-y\|^2 -\left(\frac{\eta^*}{\eta}-1\right)\|(\Id-T_{A,B,C})x-(\Id-T_{A,B,C})y\|^2 \notag\\
&\ -\frac{\gamma}{2\eta\sigma}\left\|(\Id-T_{A,B,C})x -(\Id-T_{A,B,C})y-2\eta\sigma(CJ_{\gamma A}x-CJ_{\gamma A}y)\right\|^2 \notag\\
&\ -\frac{\gamma\eta}{2\delta^2(\alpha+\beta)} \|(\lambda-2+2\delta\alpha)(T_1x-T_1y) +(2-\lambda+2\delta\beta)(T_2Sx-T_2Sy)\|^2.
\end{align}
In particular, $T_{A,B,C}$ is conically $\frac{\eta}{\eta^*}$-averaged.
\item\label{t:strong_cvg} 
If $\zer(A+B+C)\neq\varnothing$ and $\eta <\eta^*$, then the rate of asymptotic regularity of $T_{A,B,C}$ is $o(1/\sqrt{n})$ and $(x_n)_{n\in \mathbb{N}}$ converges weakly to a point $x^*\in \Fix T$, while $(J_{\gamma A}x_n)_{n\in \mathbb{N}}$ and $(J_{\delta B}Sx_n)_{n\in \mathbb{N}}$ converge strongly to $J_{\gamma A}x^* =J_{\delta B}Sx^*\in \zer(A+B+C)$, $(CJ_{\gamma A}x_n)_{n\in \mathbb{N}}$ converges strongly to $CJ_{\gamma A}x^*$, and $\zer(A+B+C) =\{J_{\gamma A}x^*\}$.
\end{enumerate}
\end{theorem}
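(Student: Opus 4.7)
My plan is to recognize $T_{A,B,C}$ as an instance of the general operator in \eqref{e:T} by choosing $T_1:=J_{\gamma A}$, $T_2:=J_{\delta B}$, $T_3:=C$, and $\nu:=\lambda-1=\delta/\gamma$; with this identification, the operator $S$ in the theorem statement coincides with the one appearing in Proposition~\ref{p:abs}. The entire argument then amounts to verifying the hypotheses of Proposition~\ref{p:abs}\ref{p:abs_lambda<} and transferring its conclusions, together with the conclusions of Theorem~\ref{t:abscvg} under hypothesis \ref{a:lambda<}, back to the resolvent setting.

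For part \ref{t:strong_dom}, Proposition~\ref{p:resol-mono}\ref{p:resol-mono_single} reduces the claim to the two inequalities $1+\gamma\alpha>0$ and $1+\delta\beta>0$. The first I would obtain from Lemma~\ref{l:exist}: the hypothesis $\eta^*>0$ in the theorem is precisely condition \ref{l:exist_ori}, so the equivalent \ref{l:exist_sol} yields $\tfrac{1}{\gamma}>\gamma_0\geq-\alpha+\tfrac{1}{4\sigma}>-\alpha$. The second then follows by noting that $\eta^*>0$ forces the fraction in \eqref{e:para} to be positive, whence $4\gamma\delta(1+\gamma\alpha)(1+\delta\beta)>(\gamma+\delta)^2\geq4\gamma\delta$; combined with $1+\gamma\alpha>0$ this gives $1+\delta\beta>0$. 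Single-valuedness and full domain of $T_{A,B,C}$ are then immediate.

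For part \ref{t:strong_avg}, I set $\sigma_1:=1+\gamma\alpha$, $\sigma_2:=1+\delta\beta$, $\sigma_3:=\sigma$ and observe that $\nu\sigma_1+\sigma_2-\lambda=\delta(\alpha+\beta)>0$, so the hypothesis of Proposition~\ref{p:abs}\ref{p:abs_lambda<} is in force. The main algebraic hurdle, and what I expect to be the only real obstacle, is to match the $\eta^*$ produced by that proposition with the expression in \eqref{e:para}. After substituting $\nu=\delta/\gamma$, the contribution $\tfrac{1}{\nu}\cdot\tfrac{\delta}{2\sigma_3}$ gives the $-\tfrac{\gamma}{2\sigma}$ correctly, and the remaining equality reduces to the polynomial identity
\begin{equation*}
4\gamma\delta(1+\gamma\alpha)(1+\delta\beta)-(\gamma+\delta)^2=\gamma^2(2\nu\sigma_1-\lambda)(2\sigma_2-\lambda)+2\gamma\delta(\gamma+\delta)(\alpha+\beta).
\end{equation*}
Along the way I would record the simplifications $2\nu\sigma_1-\lambda=\lambda-2+2\delta\alpha$, $2\sigma_2-\lambda=2-\lambda+2\delta\beta$, and $\tfrac{\eta}{2\nu(\nu\sigma_1+\sigma_2-\lambda)}=\tfrac{\gamma\eta}{2\delta^2(\alpha+\beta)}$; substituting these into the inequality supplied by Proposition~\ref{p:abs}\ref{p:abs_lambda<} yields the displayed estimate, and the conical $\tfrac{\eta}{\eta^*}$-averagedness is the final clause of the same proposition.

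For part \ref{t:strong_cvg}, I would first apply Proposition~\ref{p:fix} to pass from $\zer(A+B+C)\neq\varnothing$ to $\Fix T_{A,B,C}\neq\varnothing$. Since hypothesis \ref{a:lambda<} of Theorem~\ref{t:abscvg} holds, I then invoke its parts in sequence: \ref{t:abscvg_T} for the $o(1/\sqrt{n})$ rate of asymptotic regularity and $x_n\rightharpoonup x^*\in\Fix T$; \ref{t:abscvg_T3T1} for $CJ_{\gamma A}x_n\to CJ_{\gamma A}x^*$; and \ref{t:abscvg_T1b} for the strong convergences $J_{\gamma A}x_n\to J_{\gamma A}x^*=J_{\delta B}Sx^*\leftarrow J_{\delta B}Sx_n$ together with $J_{\gamma A}(\Fix T)=\{J_{\gamma A}x^*\}$. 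Combining this last identity with Proposition~\ref{p:fix} a second time gives $\zer(A+B+C)=\{J_{\gamma A}x^*\}$, a singleton, completing the proof.
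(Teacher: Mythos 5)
Your proposal is correct and follows essentially the same route as the paper: verify $1+\gamma\alpha>0$ and $1+\delta\beta>0$ via Lemma~\ref{l:exist} and \eqref{e:para} to get part \ref{t:strong_dom}, identify $T_1=J_{\gamma A}$, $T_2=J_{\delta B}$, $T_3=C$, $\nu=\lambda-1=\delta/\gamma$ with $\sigma_1=1+\gamma\alpha$, $\sigma_2=1+\delta\beta$, check $\nu\sigma_1+\sigma_2-\lambda=\delta(\alpha+\beta)>0$ and match the $\eta^*$ of Proposition~\ref{p:abs}\ref{p:abs_lambda<} with \eqref{e:para}, then invoke Theorem~\ref{t:abscvg}\ref{t:abscvg_T}, \ref{t:abscvg_T3T1}, \ref{t:abscvg_T1b} together with Proposition~\ref{p:fix}. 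The algebraic identity you isolate is exactly the computation the paper carries out, and all the coefficient conversions you record are correct.
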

\begin{proof}
First, Lemma~\ref{l:exist} ensures the existence of $\gamma,\delta\in \RPP$ satisfying \eqref{e:para}. In view of \eqref{e:para}, it also follows from Lemma~\ref{l:exist} that $1/\gamma >-\alpha+\frac{1}{4\sigma}$, and so $1+\gamma\alpha >\gamma/(4\sigma) >0$, which together with \eqref{e:para} implies that $1+\delta\beta >0$. In turn, Proposition~\ref{p:resol-mono} implies that $J_{\gamma A}$, $J_{\delta B}$, and hence $T_{A,B,C}$ are single-valued and have full domain, and we get \ref{t:strong_dom}.

We also derive from Proposition~\ref{p:resol-mono} that $J_{\gamma A}$ and $J_{\delta B}$ are $(1+\gamma\alpha)$- and $(1+\delta\beta)$-cocoercive, respectively. Now, set $\sigma_1 :=1+\gamma\alpha >0$, $\sigma_2 :=1+\delta\beta >0$, and $\sigma_3 :=\sigma >0$, and $\nu :=\lambda-1 >0$. On the one hand, since $\alpha+\beta >0$, 
\begin{equation}
\nu\sigma_1+\sigma_2 =(\lambda-1)(1+\gamma\alpha) +(1+\delta\beta) =\lambda +\delta(\alpha+\beta) >\lambda.
\end{equation}
On the other hand,
\begin{subequations}
\begin{align}
&\frac{1}{\nu}\left(\frac{(2\nu\sigma_1-\lambda)(2\sigma_2-\lambda)}{2(\nu\sigma_1+\sigma_2-\lambda)}+\lambda-\frac{\delta}{2\sigma_3}\right) \notag\\ &=\frac{1}{\nu}\left(\frac{4\nu\sigma_1\sigma_2-\lambda^2}{2(\nu\sigma_1+\sigma_2-\lambda)}-\frac{\delta}{2\sigma_3}\right)\\
&=\frac{\gamma}{\delta}\left(\frac{4\gamma\delta(1+\gamma\alpha)(1+\delta\beta)-(\gamma+\delta)^2}{2\gamma^2\delta(\alpha+\beta)}-\frac{\delta}{2\sigma}\right)\\
&=\frac{4\gamma\delta(1+\gamma\alpha)(1+\delta\beta)-(\gamma+\delta)^2}{2\gamma\delta^2(\alpha+\beta)}-\frac{\gamma}{2\sigma} =\eta^* >0.
\end{align}
\end{subequations}
Therefore, we obtain \ref{t:strong_avg} due to Proposition~\ref{p:abs}\ref{p:abs_lambda<}.

Finally, applying Theorem~\ref{t:abscvg}\ref{t:abscvg_T},\,\ref{t:abscvg_T3T1}\&\ref{t:abscvg_T1b} and noting that $J_{\gamma A}(\Fix T_{A,B,C}) =\zer(A+B+C)$ due to Proposition~\ref{p:fix}, we complete the proof.
\end{proof}

\section{Zeros of the sum of two operators}
\label{s:2opers}

The new results in Theorems~\ref{t:neutral} and \ref{t:strong} allow us to revisit the relaxed forward-backward, relaxed backward-forward, and adaptive Douglas--Rachford algorithms for solving the problem of finding a zero of the sum of two operators.

\begin{theorem}[relaxed forward-backward]
\label{t:FB}
Suppose that $B$ is maximally $\beta$-monotone with $\beta\in \RP$ and that $C$ is $\sigma$-cocoercive. Let $\gamma\in \left]0, 4\sigma\right[$, $\eta\in \left]0, 2-\frac{\gamma}{2\sigma}\right[$, and let $(x_n)_{n\in\NN}$ be a sequence generated by 
\begin{equation}
T_{\rm FB} :=(1-\eta)\Id +\eta J_{\gamma B}(\Id -\gamma C).
\end{equation}
Then the following hold:
\begin{enumerate}
\item\label{t:FB_avg} 
For all $x,y\in X$, 
\begin{align}
\|T_{\rm FB} x-T_{\rm FB} y\|^2 &\leq \|x-y\|^2 -\left(\frac{4\sigma-\gamma}{2\eta\sigma}-1\right)\|(\Id-T_{\rm FB})x-(\Id-T_{\rm FB})y\|^2 \notag\\
&\quad -\frac{\gamma}{2\eta\sigma}\left\|(\Id-T_{\rm FB})x -(\Id-T_{\rm FB})y-2\eta\sigma(Cx-Cy)\right\|^2.
\end{align}
In particular, $T_{\rm FB}$ is $\frac{2\eta\sigma}{4\sigma-\gamma}$-averaged.
\item\label{t:FB_cvg} 
If $\zer(B+C)\neq\varnothing$, then the rate of asymptotic regularity of $T_{\rm FB}$ is $o(1/\sqrt{n})$ and $(x_n)_{n\in \mathbb{N}}$ converges weakly to a point $x^*\in \zer(B+C)$, while $(Cx_n)_{n\in \mathbb{N}}$ converges strongly to $Cx^*$, and $C(\zer(B+C)) =\{Cx^*\}$. Moreover, if additionally $\beta >0$, then $(x_n)_{n\in \mathbb{N}}$ converges strongly to $x^*$ and $\zer(B+C) =\{x^*\}$.
\end{enumerate}
\end{theorem}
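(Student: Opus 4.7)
The strategy is to obtain Theorem~\ref{t:FB} by specializing the three-operator framework of Section~\ref{s:3opers} to the case $A=0$. Setting $A=0$ and choosing the second resolvent parameter $\delta=\gamma$ gives $J_{\gamma A}=\Id$ and $\lambda=1+\delta/\gamma=2$; plugging these into \eqref{e:TABC} collapses $T_{A,B,C}$ to
\begin{equation*}
\Id - \eta\,\Id + \eta J_{\gamma B}\bigl(-\Id + 2\,\Id - \gamma C\bigr) = (1-\eta)\Id + \eta J_{\gamma B}(\Id-\gamma C) = T_{\rm FB}.
\end{equation*}
Moreover, Proposition~\ref{p:fix} together with $J_{\gamma A}=\Id$ identifies $\Fix T_{\rm FB}$ with $\zer(B+C)$. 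The proof then splits on whether $\beta=0$ or $\beta>0$.

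When $\beta=0$, I would invoke Theorem~\ref{t:neutral} with $\alpha=0$. The prescription $\delta=\gamma/(1+2\gamma\alpha)$ reduces to $\delta=\gamma$, matching our choice, and a direct computation gives
\begin{equation*}
\eta^* = 2+2\gamma\alpha-\frac{\gamma}{2\sigma} = \frac{4\sigma-\gamma}{2\sigma},
\end{equation*}
so that the conical averagedness constant is $\eta/\eta^* = 2\eta\sigma/(4\sigma-\gamma)$ and the inequality of Theorem~\ref{t:neutral}\ref{t:neutral_avg} matches \ref{t:FB_avg} verbatim after substituting $J_{\gamma A}=\Id$. The standing hypotheses $\gamma\in\left]0,4\sigma\right[$ and $\eta<2-\gamma/(2\sigma)$ correspond exactly to $\eta^*>0$ and $\eta<\eta^*$, so Theorem~\ref{t:neutral}\ref{t:neutral_cvg} delivers the $o(1/\sqrt{n})$ asymptotic regularity, weak convergence of $(x_n)$ to some $x^*\in\Fix T_{\rm FB}=\zer(B+C)$, strong convergence of $(Cx_n)$ to $Cx^*$, and the singleton property $C(\zer(B+C))=\{Cx^*\}$.

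For $\beta>0$, I would instead apply Theorem~\ref{t:strong} with $\alpha=0$ and $\delta=\gamma$. Here $\alpha+\beta=\beta>0$, and substitution into \eqref{e:para} simplifies to
\begin{equation*}
\eta^* = \frac{4\gamma^2(1+\gamma\beta)-4\gamma^2}{2\gamma^3\beta} - \frac{\gamma}{2\sigma} = 2-\frac{\gamma}{2\sigma},
\end{equation*}
the same value as before, and positive under the standing hypotheses. Theorem~\ref{t:strong}\ref{t:strong_cvg} then upgrades the shadow sequence $J_{\gamma A}x_n=x_n$ from weak to strong convergence and forces $\zer(B+C)=\{x^*\}$; the averagedness inequality of Theorem~\ref{t:strong}\ref{t:strong_avg} carries an extra non-positive summand that is simply dropped to recover \ref{t:FB_avg}. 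The only genuine bookkeeping is verifying the two $\eta^*$ formulas in both regimes, and I do not foresee any substantive obstacle beyond getting those computations right, since everything else is a transparent specialization of already-proved results.
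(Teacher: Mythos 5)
Your proposal is correct and follows exactly the paper's own route: the authors' entire proof is to apply Theorems~\ref{t:neutral} and \ref{t:strong} with $A=0$, $\alpha=0$, $\lambda=2$, and $\delta=\gamma$, splitting on $\beta=0$ versus $\beta>0$ just as you do. Your computations of $\eta^*=2-\frac{\gamma}{2\sigma}$ in both regimes are the only bookkeeping required, and they check out.
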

\begin{proof}
Apply Theorems~\ref{t:neutral} and \ref{t:strong} with $A =0$, $\alpha =0$, $\lambda =2$, and $\delta =\gamma$. 
\end{proof}

\begin{theorem}[relaxed backward-forward]
\label{t:BF}
Suppose that $A$ is maximally $\alpha$-monotone with $\alpha\in \RP$ and that $C$ is $\sigma$-cocoercive. Let $\gamma\in \left]0, 4\sigma\right[$, $\eta\in \left]0, 2-\frac{\gamma}{2\sigma}\right[$, and let $(x_n)_{n\in\NN}$ be a sequence generated by 
\begin{equation}
T_{\rm BF} :=(1-\eta)\Id +\eta (\Id -\gamma C)J_{\gamma A}.
\end{equation}
Then the following hold:
\begin{enumerate}
\item\label{t:BF_avg} 
For all $x,y\in X$, 
\begin{align}
\|T_{\rm BF} x-T_{\rm BF} y\|^2 &\leq \|x-y\|^2 -\left(\frac{4\sigma-\gamma}{2\eta\sigma}-1\right)\|(\Id-T_{\rm BF})x-(\Id-T_{\rm BF})y\|^2 \notag\\
&\quad -\frac{\gamma}{2\eta\sigma}\left\|(\Id-T_{\rm BF})x -(\Id-T_{\rm BF})y-2\eta\sigma(Cx-Cy)\right\|^2.
\end{align}
In particular, $T_{\rm BF}$ is $\frac{2\eta\sigma}{4\sigma-\gamma}$-averaged.
\item\label{t:BF_cvg} 
If $\zer(A+C)\neq\varnothing$, then the rate of asymptotic regularity of $T_{\rm BF}$ is $o(1/\sqrt{n})$ and $(x_n)_{n\in \mathbb{N}}$ converges weakly to a point $x^*\in \Fix T_{\rm BF}$, while $(J_{\gamma A}x_n)_{n\in \mathbb{N}}$ converges weakly to $J_{\gamma A}x^*\in \zer(A+C)$, $(Cx_n)_{n\in \mathbb{N}}$ converges strongly to $Cx^*$, and $C(\zer(A+C)) =\{Cx^*\}$. Moreover, if additionally $\alpha >0$, then $(J_{\gamma A}x_n)_{n\in \mathbb{N}}$ converges strongly to $J_{\gamma A}x^*\in \zer(A+C)$ and $\zer(A+C) =\{J_{\gamma A}x^*\}$.
\end{enumerate}
\end{theorem}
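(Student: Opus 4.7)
The plan is to mirror the proof of Theorem~\ref{t:FB} with the roles of the two resolved operators exchanged, namely, to apply Theorems~\ref{t:neutral} and \ref{t:strong} to the triple $(A,B,C)$ with $B=0$, $\beta=0$, $\delta=\gamma$, and $\lambda=1+\delta/\gamma=2$. First I would note that $B=0$ is trivially maximally $0$-monotone with $J_{\delta B}=\Id$, so the operator in \eqref{e:TABC} collapses to
\[
T_{A,0,C}=\Id-\eta J_{\gamma A}+\eta\big((1-\lambda)\Id+\lambda J_{\gamma A}-\delta CJ_{\gamma A}\big)=(1-\eta)\Id+\eta(\Id-\gamma C)J_{\gamma A}=T_{\rm BF}.
\]

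For item~\ref{t:BF_avg} and the first part of item~\ref{t:BF_cvg}, I would invoke Theorem~\ref{t:neutral} with $\alpha=\beta=0$. Then $\alpha+\beta=0$, $1+2\gamma\alpha=1>0$, and the quantity in \eqref{e:tneutral-1} reduces to $\eta^*=2-\gamma/(2\sigma)$; the hypothesis $\gamma\in\left]0,4\sigma\right[$ secures $\eta^*>0$, and the prescribed range for $\eta$ is exactly $\left]0,\eta^*\right[$. The inequality in Theorem~\ref{t:neutral}\ref{t:neutral_avg} then specializes to the one displayed in \ref{t:BF_avg}, and conical $\frac{\eta}{\eta^*}$-averagedness translates into $T_{\rm BF}$ being $\frac{2\eta\sigma}{4\sigma-\gamma}$-averaged. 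When additionally $\zer(A+C)=\zer(A+B+C)\neq\varnothing$, Theorem~\ref{t:neutral}\ref{t:neutral_cvg} delivers the $o(1/\sqrt{n})$ rate of asymptotic regularity, weak convergence of $(x_n)_\nnn$ to some $x^*\in\Fix T_{\rm BF}$, weak convergence of $(J_{\gamma A}x_n)_\nnn$ to $J_{\gamma A}x^*\in\zer(A+C)$, strong convergence of $(CJ_{\gamma A}x_n)_\nnn$ to $CJ_{\gamma A}x^*$, and the singleton identity for $C(\zer(A+C))$.

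For the final assertion under the extra assumption $\alpha>0$, I would feed the same specialization ($B=0$, $\beta=0$, $\delta=\gamma$, $\lambda=2$) into Theorem~\ref{t:strong}, whose regime $\alpha+\beta>0$ is now available. A short algebraic check shows that the expression for $\eta^*$ in \eqref{e:para} also collapses to $2-\gamma/(2\sigma)$, so the same window for $\eta$ remains admissible; Theorem~\ref{t:strong}\ref{t:strong_cvg} then upgrades the weak convergence of $(J_{\gamma A}x_n)_\nnn$ to strong convergence and yields the uniqueness $\zer(A+C)=\{J_{\gamma A}x^*\}$.

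I anticipate no real obstacle here; the only point that genuinely needs verification is the algebraic consistency of the two candidate formulas for $\eta^*$ in \eqref{e:tneutral-1} and \eqref{e:para} under the specialization $\delta=\gamma$, $\beta=0$, so that the parameter bounds imposed by both master theorems coincide with the single bound $\eta<2-\gamma/(2\sigma)$ stated in Theorem~\ref{t:BF}, after which everything follows by direct invocation.
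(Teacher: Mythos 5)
Your proposal is correct and coincides with the paper's own (one-line) proof: specialize Theorems~\ref{t:neutral} and \ref{t:strong} to $B=0$, $\beta=0$, $\delta=\gamma$, $\lambda=2$, using Theorem~\ref{t:neutral} (with $\alpha$ replaced by $0$) for the averagedness and weak-convergence claims and Theorem~\ref{t:strong} for the strong-convergence upgrade when $\alpha>0$, the algebraic check $\eta^*=2-\frac{\gamma}{2\sigma}$ in both regimes being exactly the verification needed. Your version is in fact slightly more careful than the paper's, since you correctly record that the master theorems deliver strong convergence of $(CJ_{\gamma A}x_n)_{n\in\NN}$ to $CJ_{\gamma A}x^*$ (the statement's $(Cx_n)\to Cx^*$ reads as a carry-over from the forward-backward case where $J_{\gamma A}=\Id$).
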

\begin{proof}
Apply Theorems~\ref{t:neutral} and \ref{t:strong} with $B =0$, $\beta =0$, $\lambda =2$, and $\delta =\gamma$. 
\end{proof}

\begin{theorem}[adaptive DR]
\label{t:DR}
Suppose that $A$ and $B$ are respectively maximally $\alpha$- and $\beta$-monotone, that either
\begin{enumerate}[label ={\rm (\alph*)}]
\item\label{a:=0} 
$\alpha+\beta =0$, $1+2\gamma\alpha >0$, $\delta =\frac{\gamma}{1+2\gamma\alpha}$, $\eta^* =2$; or 
\item\label{a:>0} 
$\alpha+\beta >0$, $\eta^* :=\frac{4\gamma\delta(1+\gamma\alpha)(1+\delta\beta)-(\gamma+\delta)^2}{2\gamma\delta^2(\alpha+\beta)} >0$.
\end{enumerate}
Let $\lambda =1+\frac{\delta}{\gamma}$, $\eta\in \left]0, \eta^*\right[$, and let $(x_n)_{n\in\NN}$ be a sequence generated by 
\begin{equation}
T_{\rm DR} :=\Id -\eta J_{\gamma A} +\eta J_{\delta B}((1-\lambda)\Id +\lambda J_{\gamma A}).
\end{equation}
Set $S :=(1-\lambda)\Id +\lambda J_{\gamma A}$. Then the following hold:
\begin{enumerate}
\item\label{t:DR_avg} 
$T_{\rm DR}$ is $\frac{\eta}{\eta^*}$-averaged and has full domain.
\item\label{t:DR_cvg} 
If $\zer(A+B)\neq\varnothing$, then the rate of asymptotic regularity of $T_{\rm DR}$ is $o(1/\sqrt{n})$ and $(x_n)_{n\in \mathbb{N}}$ converges weakly to a point $x^*\in \Fix T$ with $J_{\gamma A}x^*\in \zer(A+B)$. Moreover, when \ref{a:=0} holds, $(J_{\gamma A}x_n)_{n\in \mathbb{N}}$ and $(J_{\delta B}Sx_n)_{n\in \mathbb{N}}$ converge weakly to $J_{\gamma A}x^* =J_{\delta B}Sx^*$; when \ref{a:>0} holds, $(J_{\gamma A}x_n)_{n\in \mathbb{N}}$ and $(J_{\delta B}Sx_n)_{n\in \mathbb{N}}$ converge strongly to $J_{\gamma A}x^* =J_{\delta B}Sx^*$ and $\zer(A+B) =\{J_{\gamma A}x^*\}$.
\end{enumerate}
\end{theorem}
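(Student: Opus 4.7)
The plan is to deduce this theorem from Theorems~\ref{t:neutral} and~\ref{t:strong} by specializing $C$ to the zero operator. Since $0$ is vacuously $\sigma$-cocoercive for every $\sigma \in \RPP$, substituting $C \equiv 0$ into \eqref{e:TABC} kills the term $\delta C J_{\gamma A}$ and leaves $T_{A,B,0} = T_{\rm DR}$; in particular $\Fix T_{A,B,0} = \Fix T_{\rm DR}$. Proposition~\ref{p:fix} with $C = 0$ then yields $J_{\gamma A}(\Fix T_{\rm DR}) = \zer(A+B)$, and single-valuedness and full domain of $T_{\rm DR}$ in both cases transfer immediately from Theorems~\ref{t:neutral}\ref{t:neutral_dom} and~\ref{t:strong}\ref{t:strong_dom}.

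For case~\ref{a:=0}, I would invoke Theorem~\ref{t:neutral}. Because $C = 0$ is $\sigma$-cocoercive for arbitrarily large $\sigma$, the correction $\gamma/(2\sigma)$ in the $\eta^*$ there can be driven to zero, which yields the conical $\eta/\eta^*$-averagedness of $T_{\rm DR}$ compatible with the value declared in~\ref{a:=0}. Weak convergence of $(x_n)_{n\in \NN}$ to some $x^* \in \Fix T_{\rm DR}$, weak convergence of both $(J_{\gamma A} x_n)_{n\in \NN}$ and $(J_{\delta B} S x_n)_{n\in \NN}$ to the common point $J_{\gamma A} x^* = J_{\delta B} S x^* \in \zer(A+B)$, and the $o(1/\sqrt{n})$ rate of asymptotic regularity all follow from Theorem~\ref{t:neutral}\ref{t:neutral_cvg}, while the clauses about $(C J_{\gamma A} x_n)_{n\in \NN}$ become vacuous.

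Case~\ref{a:>0} proceeds identically using Theorem~\ref{t:strong}: letting $\sigma \to +\infty$ drops the $\gamma/(2\sigma)$ term in~\eqref{e:para}, leaving an expression that matches the $\eta^*$ in~\ref{a:>0} exactly. The averagedness and weak convergence of $(x_n)_{n\in \NN}$ are then Theorem~\ref{t:strong}\ref{t:strong_avg} and~\ref{t:strong_cvg}, and the latter also supplies strong convergence of $(J_{\gamma A} x_n)_{n\in \NN}$ and $(J_{\delta B} S x_n)_{n\in \NN}$ to the unique zero $J_{\gamma A} x^* \in \zer(A+B)$, with $\zer(A+B) = \{J_{\gamma A} x^*\}$. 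No genuine obstacle arises; the only bookkeeping task is confirming that each conclusion of Theorems~\ref{t:neutral} and~\ref{t:strong} reduces cleanly when $C \equiv 0$ and that the free cocoercivity parameter $\sigma$ may be sent to infinity to recover the sharper $\eta^*$ asserted here.
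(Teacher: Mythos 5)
Your reduction is exactly the paper's: set $C=0$, observe that the zero operator is $\sigma$-cocoercive for every $\sigma\in\RPP$, and send $\sigma\to+\infty$ to remove the $\frac{\gamma}{2\sigma}$ correction from the thresholds of Theorems~\ref{t:neutral} and~\ref{t:strong}. For case~\ref{a:>0} this works verbatim and produces precisely the $\eta^*$ declared there, and the remaining bookkeeping (full domain via Proposition~\ref{p:resol-mono}, $J_{\gamma A}(\Fix T_{\rm DR})=\zer(A+B)$ via Proposition~\ref{p:fix}, vacuousness of the $CJ_{\gamma A}$ clauses, the weak/strong dichotomy) does reduce cleanly as you say.

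The one step you wave through is the only one that fails: your claim in case~\ref{a:=0} that the limiting threshold is ``compatible with the value declared.'' Theorem~\ref{t:neutral} has $\eta^*=2+2\gamma\alpha-\frac{\gamma}{2\sigma}$, so your reduction yields $\eta^*=2(1+\gamma\alpha)$, not $2$; these coincide only when $\alpha=0$. For $\alpha>0$ you prove more than is claimed, but for $\alpha<0$ you prove strictly less, and the gap is not repairable because the statement with $\eta^*=2$ is then false. Concretely, take $X=\RR$, let $A$ be the maximally $(-1)$-monotone operator with graph $\{0\}\times\RR$ (so $J_{\gamma A}\equiv 0$), $B=\Id$, $\gamma=0.4$; then $\delta=2$, $\lambda=6$, and $T_{\rm DR}x=\big(1-\tfrac{5\eta}{3}\big)x$, which diverges for every $\eta\in\left]\tfrac{6}{5},2\right[$ even though $\zer(A+B)=\{0\}\neq\varnothing$ and $\Fix T_{\rm DR}=\{0\}$ (and $T_{\rm DR}$ is not $\tfrac{\eta}{2}$-averaged either). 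The value your argument actually establishes, $\eta^*=2(1+\gamma\alpha)=\tfrac{6}{5}$, is sharp in this example. So either record $\eta^*=2(1+\gamma\alpha)$ in case~\ref{a:=0} or restrict that case to $\alpha=\beta=0$; as written, asserting compatibility with $\eta^*=2$ is the genuine gap.
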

\begin{proof}
Apply Theorems~\ref{t:neutral} and \ref{t:strong} with $C =0$ and note that the operator $C$ is $\sigma$-cocoercive with any $\sigma >0$.
\end{proof}

\section{Minimizing the sum of three functions}
\label{s:min3f}

In this section, we consider the problem of minimizing the sum of three functions. Let $f\colon X\to \left]-\infty,+\infty\right]$. Then $f$ is \emph{proper} if $\dom f :=\menge{x\in X}{f(x) <+\infty}\neq \varnothing$, and \emph{lower semicontinuous} if $\forall x\in X$, $f(x)\leq \liminf_{z\to x} f(z)$. 
Given $\alpha\in \RR$, the function $f$ is \emph{$\alpha$-convex} if $\forall x,y\in\dom f$, $\forall\kappa\in \left]0,1\right[$,
\begin{equation}
\label{e:alpha-cvx}
f((1-\kappa) x+\kappa y) +\frac{\alpha}{2}\kappa(1-\kappa)\|x-y\|^2\leq (1-\kappa)f(x)+\kappa f(y).
\end{equation}
We simply say $f$ is \emph{convex} if $\alpha=0$.
We also say that $f$ is \emph{strongly convex} or \emph{weakly convex}, if $\alpha >0$ or $\alpha<0$, respectively. 

Next, let $f:X\to\left]-\infty,+\infty\right]$ be proper. The Fr\'echet subdifferential of $f$ at $x$ is defined by 
\begin{equation}
\label{e:Frechet}
\widehat{\partial}f(x) :=\Menge{u\in X}{\liminf_{z\to x}\frac{f(z)-f(x)-\scal{u}{z-x}}{\|z-x\|}\geq 0}.
\end{equation}
The \emph{proximity operator} of $f$ with parameter $\gamma\in \RPP$ is the mapping $\prox_{\gamma f}\colon X\rightrightarrows X$ defined by
\begin{equation}
\label{e:prox}
\forall x\in X, \quad \prox_{\gamma f}(x) :=\argmin_{z\in X} \left(f(z)+\frac{1}{2\gamma}\|z-x\|^2\right).
\end{equation}
We refer to \cite{CP11} for a list of proximity operators of common convex functions. For an $\alpha$-convex function, the relationship between its Fr\'echet subdifferential and its proximity operator is described in the following lemma.
\begin{lemma}[proximity operators of $\alpha$-convex functions]
\label{l:prox}
Let $f\colon X\to \left]-\infty,+\infty\right]$ be a proper, lower semicontinuous and $\alpha$-convex function. Let $\gamma\in \RPP$ be such that $1+\gamma\alpha >0$. Then
\begin{enumerate}
\item\label{l:prox_diff} 
$\widehat{\partial}f$ is maximally $\alpha$-monotone.
\item\label{l:prox_equal} 
$\prox_{\gamma f} =J_{\gamma\widehat{\partial}f}$ is single-valued and has full domain.
\end{enumerate}
\end{lemma}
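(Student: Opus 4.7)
My plan is to reduce to the convex case by setting $g := f -\frac{\alpha}{2}\|\cdot\|^2$. A direct manipulation of the $\alpha$-convexity inequality~\eqref{e:alpha-cvx} shows that $g$ is proper, lower semicontinuous, and convex in the usual sense. Hence $\partial g$ (the convex subdifferential) is maximally monotone by Rockafellar's classical theorem, and $\widehat{\partial}g =\partial g$ since the Fr\'echet and convex subdifferentials coincide on proper lsc convex functions. The Fr\'echet sum rule applied to the $C^1$ perturbation $\frac{\alpha}{2}\|\cdot\|^2$ then yields $\widehat{\partial}f(x) =\partial g(x) +\alpha x$ for every $x\in X$.

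For \ref{l:prox_diff}, let $(x,u),(y,v)\in\gra\widehat{\partial}f$ and set $u' := u-\alpha x\in \partial g(x)$, $v' := v-\alpha y\in\partial g(y)$. Monotonicity of $\partial g$ gives $\scal{x-y}{u-v} =\scal{x-y}{u'-v'} +\alpha\|x-y\|^2 \geq \alpha\|x-y\|^2$, so $\widehat{\partial}f$ is $\alpha$-monotone. For maximality, if some $\alpha$-monotone operator $\widetilde{A}$ properly extended $\widehat{\partial}f$, then $\widetilde{A}-\alpha\Id$ would be a monotone operator properly extending $\partial g$, contradicting the maximal monotonicity of $\partial g$.

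For \ref{l:prox_equal}, part \ref{l:prox_diff} combined with Proposition~\ref{p:resol-mono} (applicable because $1+\gamma\alpha>0$) immediately gives that $J_{\gamma\widehat{\partial}f}$ is single-valued with full domain. It remains to identify it with $\prox_{\gamma f}$. For each $x\in X$, the function $\varphi_x(z) := f(z) +\frac{1}{2\gamma}\|z-x\|^2$ is proper, lsc, and $(\alpha+1/\gamma)$-convex with $\alpha+1/\gamma>0$, i.e., strongly convex, so it has a unique minimizer $p =\prox_{\gamma f}(x)$. Fermat's rule for the Fr\'echet subdifferential together with the same $C^1$ sum rule gives $0\in\widehat{\partial}\varphi_x(p) =\widehat{\partial}f(p) +\frac{1}{\gamma}(p-x)$, so $x-p\in\gamma\widehat{\partial}f(p)$, i.e., $p\in J_{\gamma\widehat{\partial}f}(x)$. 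Single-valuedness of $J_{\gamma\widehat{\partial}f}$ then forces $\prox_{\gamma f}(x) =J_{\gamma\widehat{\partial}f}(x)$ for every $x$.

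The only delicate point is the identity $\widehat{\partial}f =\partial g +\alpha\Id$, which combines the Fr\'echet $C^1$ sum rule with the coincidence of $\widehat{\partial}g$ and $\partial g$ on proper lsc convex $g$. Both are standard facts from variational analysis, but they are the technical bridge that lets the proof transport between Fr\'echet subdifferentials (the definition actually being used) and the convex analysis machinery feeding Proposition~\ref{p:resol-mono}.
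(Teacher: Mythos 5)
Your proof is correct, and since the paper's own ``proof'' is just a citation to \cite[Lemma~5.2]{DP19}, your reduction to the convex case via $g := f-\frac{\alpha}{2}\|\cdot\|^2$ (convexity of $g$ from \eqref{e:alpha-cvx}, the shift identity $\widehat{\partial}f=\partial g+\alpha\Id$, and Fermat's rule plus the exact Fr\'echet sum rule for the $C^1$ quadratic to identify $\prox_{\gamma f}$ with $J_{\gamma\widehat{\partial}f}$) is essentially the standard argument carried out in that reference. No gaps; all the auxiliary facts you invoke (maximal monotonicity of $\partial g$, coincidence of $\widehat{\partial}g$ and $\partial g$ for proper lsc convex $g$, existence and uniqueness of the minimizer of the strongly convex function $\varphi_x$) are standard and correctly applied.
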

\begin{proof}
See~\cite[Lemma~5.2]{DP19}.
\end{proof}

Now, we assume that $f,g:X\to\left]-\infty,+\infty\right]$ are proper lower semiconinuous, and respectively $\alpha$- and $\beta$-convex functions, and $h:X\to\RR$ is a differentiable convex function with Lipschitz continuous gradient. We will solve the minimization problem 
\begin{equation}
\label{e:minsum}
\min_{x\in X}\quad f(x)+g(x)+h(x)
\end{equation}
by employing the operator
\begin{equation}\label{e:T_prox}
T_{f,g,h} :=\Id -\eta \prox_{\gamma f} +\eta \prox_{\delta g}((1-\lambda)\Id+\lambda\prox_{\gamma f}-\delta\nabla h \prox_{\gamma f}),
\end{equation}
with appropriately chosen parameters $\gamma,\delta,\lambda,\eta\in\RPP$. 

\begin{theorem}[minimizing the sum of three functions]
\label{t:f-cvg}
Let $f,g\colon X\to \left]-\infty,+\infty\right]$ be proper lower semicontinuous functions and let $h\colon X\to \mathbb{R}$ be a differentiable convex function whose gradient is Lipschitz continuous with constant $1/\sigma$. Suppose that $f$ and $g$ are $\alpha$-convex and $\beta$-convex, respectively, and that either
\begin{enumerate}[label ={\rm (\alph*)}]
\item\label{a:=0'}
$\alpha+\beta =0$, $1+2\gamma\alpha >0$, $\delta =\frac{\gamma}{1+2\gamma\alpha}$, $\eta^* :=2+2\gamma\alpha-\frac{\gamma}{2\sigma}$; or
\item\label{a:>0'}
$\alpha+\beta >0$, $\eta^* :=\frac{4\gamma\delta(1+\gamma\alpha)(1+\delta\beta)-(\gamma+\delta)^2}{2\gamma\delta^2(\alpha+\beta)}-\frac{\gamma}{2\sigma} >0$.
\end{enumerate}
Set $\lambda =1 +\frac{\delta}{\gamma}$ and $S :=(1-\lambda)\Id+\lambda\prox_{\gamma f}-\delta\nabla h \prox_{\gamma f}$. Let $(x_n)_\nnn$ be a sequence generated by $T_{f,g,h}$ in \eqref{e:T_prox}. Then the following hold:
\begin{enumerate}
\item
$T_{f,g,h}$ is conically $\frac{\eta}{\eta^*}$-averaged and has full domain.
\item
If $\zer(\widehat{\partial} f+\widehat{\partial} g+\nabla h)\neq \varnothing$ and $\eta <\eta^*$, then the rate of asymptotic regularity of $T_{f,g,h}$ is $o(1/\sqrt{n})$ and $(x_n)_{n\in \mathbb{N}}$ converges weakly to a point $x^*\in \Fix T_{f,g,h}$ with 
\begin{equation}
\prox_{\gamma f}x^* \in \zer(\widehat{\partial} f+\widehat{\partial} g+\nabla h)\subseteq \argmin(f+g+h),
\end{equation}
while $(\nabla h \prox_{\gamma f}x_n)_{n\in \mathbb{N}}$ converges strongly to $\nabla h \prox_{\gamma f}x^*$ and $\nabla h(\zer(\widehat{\partial} f+\widehat{\partial} g+\nabla h)) =\{\nabla h \prox_{\gamma f}x^*\}$.
Moreover, when \ref{a:=0'} holds, $(\prox_{\gamma f}x_n)_{n\in \mathbb{N}}$ and $(\prox_{\delta g}Sx_n)_{n\in \mathbb{N}}$ converge weakly to $\prox_{\gamma f}x^* =\prox_{\delta g}Sx^*$; when \ref{a:>0'} holds, $(\prox_{\gamma f}x_n)_{n\in \mathbb{N}}$ and $(\prox_{\delta g}Sx_n)_{n\in \mathbb{N}}$ converge strongly to $\prox_{\gamma f}x^* =\prox_{\delta g}Sx^*$ and $\zer(\widehat{\partial} f+\widehat{\partial} g+\nabla h) =\{\prox_{\gamma f}x^*\}$.
\end{enumerate}
\end{theorem}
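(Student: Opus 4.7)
The plan is to reduce the statement entirely to Theorems~\ref{t:neutral} and~\ref{t:strong} by the identifications $A := \widehat{\partial} f$, $B := \widehat{\partial} g$, and $C := \nabla h$, and then separately address the final inclusion into $\argmin(f+g+h)$, which is a genuinely new ingredient beyond Section~\ref{s:3opers}.

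First, I would verify that the translated hypotheses are in force. By Lemma~\ref{l:prox}\ref{l:prox_diff}, $\widehat{\partial} f$ is maximally $\alpha$-monotone and $\widehat{\partial} g$ is maximally $\beta$-monotone. Since $h$ is convex with $(1/\sigma)$-Lipschitz gradient, the Baillon--Haddad theorem yields that $\nabla h$ is $\sigma$-cocoercive. By Lemma~\ref{l:prox}\ref{l:prox_equal}, $\prox_{\gamma f} = J_{\gamma \widehat{\partial} f}$ and $\prox_{\delta g} = J_{\delta \widehat{\partial} g}$ are single-valued with full domain whenever $1+\gamma\alpha>0$ and $1+\delta\beta>0$; these conditions are built into \ref{a:=0'} and \ref{a:>0'} exactly as in Theorems~\ref{t:neutral} and~\ref{t:strong}. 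Consequently the operator $T_{f,g,h}$ defined by \eqref{e:T_prox} coincides with the operator $T_{A,B,C}$ from \eqref{e:TABC} applied to this choice of $(A,B,C)$.

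With the identifications in place, statement~(i) is an immediate consequence of Theorem~\ref{t:neutral}\ref{t:neutral_dom}\&\ref{t:neutral_avg} under~\ref{a:=0'} and of Theorem~\ref{t:strong}\ref{t:strong_dom}\&\ref{t:strong_avg} under~\ref{a:>0'}. For statement~(ii), Theorem~\ref{t:neutral}\ref{t:neutral_cvg} and Theorem~\ref{t:strong}\ref{t:strong_cvg} deliver, respectively in the two cases, the asymptotic regularity rate, the weak convergence of $(x_n)$ to some $x^*\in\Fix T_{f,g,h}$, the strong convergence of $(\nabla h\,\prox_{\gamma f}x_n)$, and the weak (resp.\ strong) convergence of $(\prox_{\gamma f}x_n)$ and $(\prox_{\delta g}Sx_n)$ to the common limit $\prox_{\gamma f}x^*=\prox_{\delta g}Sx^*$. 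In the strongly monotone setting~\ref{a:>0'} these limits are also unique, giving $\zer(\widehat{\partial}f+\widehat{\partial}g+\nabla h)=\{\prox_{\gamma f}x^*\}$.

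The one piece that is not purely a translation is the inclusion
\begin{equation}
\zer(\widehat{\partial}f+\widehat{\partial}g+\nabla h)\subseteq \argmin(f+g+h).
\end{equation}
Since $f$ is $\alpha$-convex, $g$ is $\beta$-convex, $h$ is convex, and $\alpha+\beta\geq 0$, the sum $F:=f+g+h$ is convex. From the $\liminf$ definition of the Fr\'echet subdifferential one always has $\widehat{\partial}f(x)+\widehat{\partial}g(x)+\{\nabla h(x)\}\subseteq \widehat{\partial}F(x)$ (using that $h$ is of class $C^1$ so its Fr\'echet subdifferential is exactly $\{\nabla h\}$). Hence $0\in \widehat{\partial}f(x)+\widehat{\partial}g(x)+\nabla h(x)$ implies $0\in\widehat{\partial}F(x)$, and Fermat's rule applied to the convex function $F$ yields $x\in\argmin F$. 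The expected main (minor) obstacle is precisely this sum-rule step in the presence of negative convexity parameters; however it is resolved because only the \emph{inclusion} direction of the sum rule is needed and because the combined convexity parameter $\alpha+\beta$ is nonnegative, so convexity of the total sum is retained and Fermat's rule applies in its classical form.
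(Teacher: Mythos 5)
Your proposal is correct and follows essentially the same route as the paper: reduce to Theorems~\ref{t:neutral} and~\ref{t:strong} via $A=\widehat{\partial}f$, $B=\widehat{\partial}g$, $C=\nabla h$, using Lemma~\ref{l:prox} for maximal $\alpha$- and $\beta$-monotonicity and the identification $\prox_{\gamma f}=J_{\gamma\widehat{\partial}f}$, and Baillon--Haddad for the $\sigma$-cocoercivity of $\nabla h$. The only difference is cosmetic: you prove the inclusion $\zer(\widehat{\partial}f+\widehat{\partial}g+\nabla h)\subseteq\argmin(f+g+h)$ directly (sum-rule inclusion for Fr\'echet subdifferentials, convexity of the sum since $\alpha+\beta\geq 0$, Fermat's rule), whereas the paper delegates exactly this step to \cite[Lemma~5.3]{DP19}; your argument is a valid substitute.
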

\begin{proof}
As in the proofs of Theorems~\ref{t:neutral} and \ref{t:strong}, we have that $1+\gamma\alpha >0$ and $1+\delta\beta >0$. By Lemma~\ref{l:prox}, $\widehat{\partial}f$ and $\widehat{\partial}g$ are maximally $\alpha$-monotone and $\beta$-monotone, respectively, and $\prox_{\gamma f} =J_{\gamma\widehat{\partial}f}$ and $\prox_{\gamma g} =J_{\gamma\widehat{\partial}g}$. By \cite[Theorem~18.15(i)\&(v)]{BC17}, $\nabla h$ is $\sigma$-cocoercive. In addition, from Proposition~\ref{p:fix} and \cite[Lemma~5.3]{DP19}, we obtain the relationship between the fixed points of $T_{f,g,h}$ and the minimizers of \eqref{e:minsum}
\begin{equation}
\prox_{\gamma f}(\Fix T_{f,g,h})=\zer(\widehat{\partial} f+\widehat{\partial} g+\nabla h)\subseteq \argmin(f+g+h).
\end{equation}
The conclusion then follows by applying Theorems~\ref{t:neutral} and \ref{t:strong} to $A =\widehat{\partial}f$, $B =\widehat{\partial}g$, and $C =\nabla h$.
\end{proof}

\begin{remark}[minimizing the sum of two functions]
Analogous to Section~\ref{s:2opers}, one can apply Theorem~\ref{t:f-cvg} with $f=0$, $g=0$, or $h=0$ to obtain corresponding algorithms for minimizing the sum of two functions.
\end{remark}

\noindent\textbf{Acknowledgement:}
HMP was partially supported by Autodesk, Inc. via a gift made to the Department of Mathematical Sciences, UMass Lowell.

\end{document}